\newcommand{\Oh}{{\ensuremath{\mathcal{O}}}}
\title{
    Transforming Stacks into Queues:\\
    Mixed and Separated Layouts of Graphs
}
\titlerunning{
    Transforming Stacks into Queues: Mixed and Separated Layouts of Graphs
}
\author{Julia~Katheder}{Wilhelm-Schickard-Institut f{\"u}r Informatik, Universit{\"a}t T{\"u}bingen, T{\"u}bingen, Germany}
{julia.katheder@uni-tuebingen.de}
{https://orcid.org/0000-0002-7545-0730}
{The work of J. Katheder is supported by DFG grant Ka 812-18/2.}
\author{Michael~Kaufmann}{Wilhelm-Schickard-Institut f{\"u}r Informatik, Universit{\"a}t T{\"u}bingen, T{\"u}bingen, Germany}
{mk@informatik.uni-tuebingen.de}
{https://orcid.org/0000-0001-9186-3538}
{The work of M. Kaufmann is supported by DFG grant Ka 812-18/2.}
\author{Sergey~Pupyrev}{Menlo Park, CA, USA}
{spupyrev@gmail.com}
{https://orcid.org/0000-0003-4089-673X}{}
\author{Torsten~Ueckerdt}{Institute of Theoretical Informatics, Karlsruhe Institute of Technology, Karlsruhe, Germany}
{torsten.ueckerdt@kit.edu}
{https://orcid.org/0000-0002-0645-9715}
{The work of T. Ueckerdt is supported by DFG - 520723789.}
\authorrunning{J. Katheder, M. Kaufmann, S. Pupyrev, and T. Ueckerdt}
\keywords{separated linear layouts, stack number, queue number, mixed number, bipartite graphs}
\theoremstyle{plain}
\newtheorem{open}[theorem]{Open Problem}
\crefname{open}{Open Problem}{Open Problems}
\definecolor{defblue}{rgb}{0.121,0.47,0.705}
\newcommand{\df}[1]{{\color{defblue}\it #1}}
\DeclareMathOperator{\sn}{sn}
\DeclareMathOperator{\ssn}{\overline{sn}}
\DeclareMathOperator{\qn}{qn}
\DeclareMathOperator{\sqn}{\overline{qn}}
\DeclareMathOperator{\mn}{mn}
\DeclareMathOperator{\smn}{\overline{mn}}
\begin{document}
\thispagestyle{empty}

\date{}
\maketitle

\begin{abstract}
    Some of the most important open problems for linear layouts of graphs ask for the relation between a graph's queue number and its stack number or mixed number.
    In such, we seek a vertex order and edge partition of $G$ into parts with pairwise non-crossing edges (a stack) or with pairwise non-nesting edges (a queue).
    Allowing only stacks, only queues, or both, the minimum number of required parts is the graph's stack number $\sn(G)$, queue number $\qn(G)$, and mixed number $\mn(G)$, respectively.

    Already in 1992, Heath and Rosenberg asked whether $\qn(G)$ is bounded in terms of $\sn(G)$, that is, whether stacks ``can be transformed into'' queues.
    This is equivalent to bipartite $3$-stack graphs having bounded queue number (Dujmovi\'{c} and Wood, 2005).
    Recently, Alam et al. asked whether $\qn(G)$ is bounded in terms of $\mn(G)$, which we show to also be equivalent to the previous questions.

    We approach the problem by considering \emph{separated} linear layouts of bipartite graphs.
    In this natural setting all vertices of one part must precede all vertices of the other part.
    Separated stack and queue numbers coincide, and for fixed vertex orders, graphs with bounded separated stack/queue number can be characterized and efficiently recognized, whereas the separated mixed layouts are more challenging.
    In this work, we thoroughly investigate the relationship between separated and non-separated, mixed and pure linear layouts.
\end{abstract}

\section{Introduction}
In this paper we study linear layouts of graphs\footnote{Throughout, all graphs in this paper are simple, undirected, finite, and have at least one edge.}.
That is, for a graph $G = (V,E)$, we consider a total order $\sigma$ of its vertex set $V$, while $\sigma$ defines the relative position of its edges.
In particular, we investigate for a graph $G$ its well-known parameters stack number, $\sn(G)$, and its queue number, $\qn(G)$, as well as their common generalization called its mixed number, $\mn(G)$.
Their precise definitions go as follows.

\subparagraph*{Stack, Queue, and Mixed Numbers.}
Given a vertex order $\sigma$, two edges $(u,v)$ and $(x,y)$ in $E$ are said to \df{cross} in $\sigma$ if any order of their endvertices symmetric to $u <_{\sigma} x <_{\sigma} v <_{\sigma} y$ is prescribed by $\sigma$.
Roughly speaking, for the stack number of $G$, we want a vertex order $\sigma$ in which not many edges pairwise cross.
Formally, for an integer $s \geq 1$, an \df{$s$-stack layout} of $G = (V,E)$ is a total order $\sigma$ of $V$ together with a partition of $E$ into subsets $E_1,\dots,E_s$, called \df{stacks}, such that no two edges in the same stack cross.
The minimum number $s$ of stacks needed for a stack layout of graph $G$ is called its \df{stack~number} and denoted by $\sn(G)$.
Stack numbers were first investigated by Bernhart and Kainen~\cite{BK79} in 1979, building on Kainen and Ollman~\cite{K74,O73}\footnote{We remark that $s$-stack layouts are also known as $s$-page book embeddings, where stacks are then called pages. Similarly, the stack number $\sn(G)$ is sometimes called the book thickness or page number of $G$.}

As a concept ``dual'' to $s$-stack layouts, for an integer $q \geq 1$, a \df{$q$-queue layout} of $G = (V,E)$ is a total order $\sigma$ of $V$ together with a partition of $E$ into subsets $E_1,\dots,E_q$, called \df{queues}, such that no two edges in the same queue \df{nest};
that is, there are no edges $(u,v)$ and $(x,y)$ in a queue with $u <_{\sigma} x <_{\sigma} y <_{\sigma} v$ (or any symmetric order).
The minimum number $q$ of queues needed for a queue layout of graph $G$ is called its \df{queue number} and denoted by $\qn(G)$.
Queue numbers were introduced by Heath and Rosenberg~\cite{HR92} in 1992.

Stack and queue layouts are generalized through the notion of a \df{mixed} layout.
For integers $s,q \geq 1$, an \df{$s$-stack $q$-queue layout} of $G = (V,E)$ is a total order $\sigma$ of $V$ together with a partition of $E$ into $s$ stacks and $q$ queues.
The minimum value of $s + q$ needed for an $s$-stack $q$-queue layout of graph $G$ is called its \df{mixed number} and denoted by $\mn(G)$.
Mixed layouts were already considered by Heath and Rosenberg~\cite{HR92} in 1992, while a thorough study started only recently~\cite{P17,MDALAM2022,HMP24}. In contrast to mixed layouts, we call a layout \df{pure} if only stacks or only queues are being used.

\subparagraph*{Comparing Stack, Queue, and Mixed Numbers}
Besides their similar definitions, there are more similarities between \df{$k$-stack graphs}, \df{$k$-queue graphs}, and \df{$k$-mixed graphs}, where a $k$-(stack, queue, mixed) graph is defined as a graph admitting a $k$-(stack, queue, mixed) layout.
For example, in each case we have sparse graphs with only $\Oh(kn)$ edges for $n$ vertices~\cite{BK79,HR92,Pemmaraju92}.
Moreover, stack, queue, and mixed numbers are all bounded for planar graphs~\cite{DJMMUW20,ForsterKMPR23,KaufmannBKPRU20,Yannakakis89} and for bounded treewidth graphs~\cite{ABGKP18,Wie17,DMW05,GanleyH01}.
On the other hand, neither stack nor queue number is bounded for $3$-regular graphs~\cite{Wood08,BaratMW06}.
Already in 1992, Heath, Leighton, and Rosenberg~\cite{HLR92} investigated the relationship between stack and queue layouts; in particular whether stack layouts and queue layouts can be transformed into each other.
They introduced the following fundamental question, which remains unanswered despite a wealth of studies on linear graph layouts.

\begin{open}[Heath, Leighton, and Rosenberg~\cite{HLR92}, see also \cite{DW05,DEHMW21,EHMNSW23}]{\ \\}
	\label{open:rel1}
	Do graphs with bounded stack number have bounded queue number?
\end{open}

We emphasize that a companion question (\emph{``do graphs with bounded queue number have bounded stack number?''})
has been recently resolved in the negative by Dujmovi{\'c} et al.~\cite{DEHMW21}.
One attempt to resolve \cref{open:rel1} was made by Dujmovi{\'c} and Wood~\cite{DW05}
who showed that the problem is equivalent to the question of whether bipartite $3$-stack graphs have bounded queue number.
Note that $1$-stack graphs and $2$-stack graphs are planar, and hence, they have a bounded
queue number~\cite{DJMMUW20}, but the question remains open for $3$-stack graphs.

\begin{figure}[!tb]
	\centering
	\includegraphics{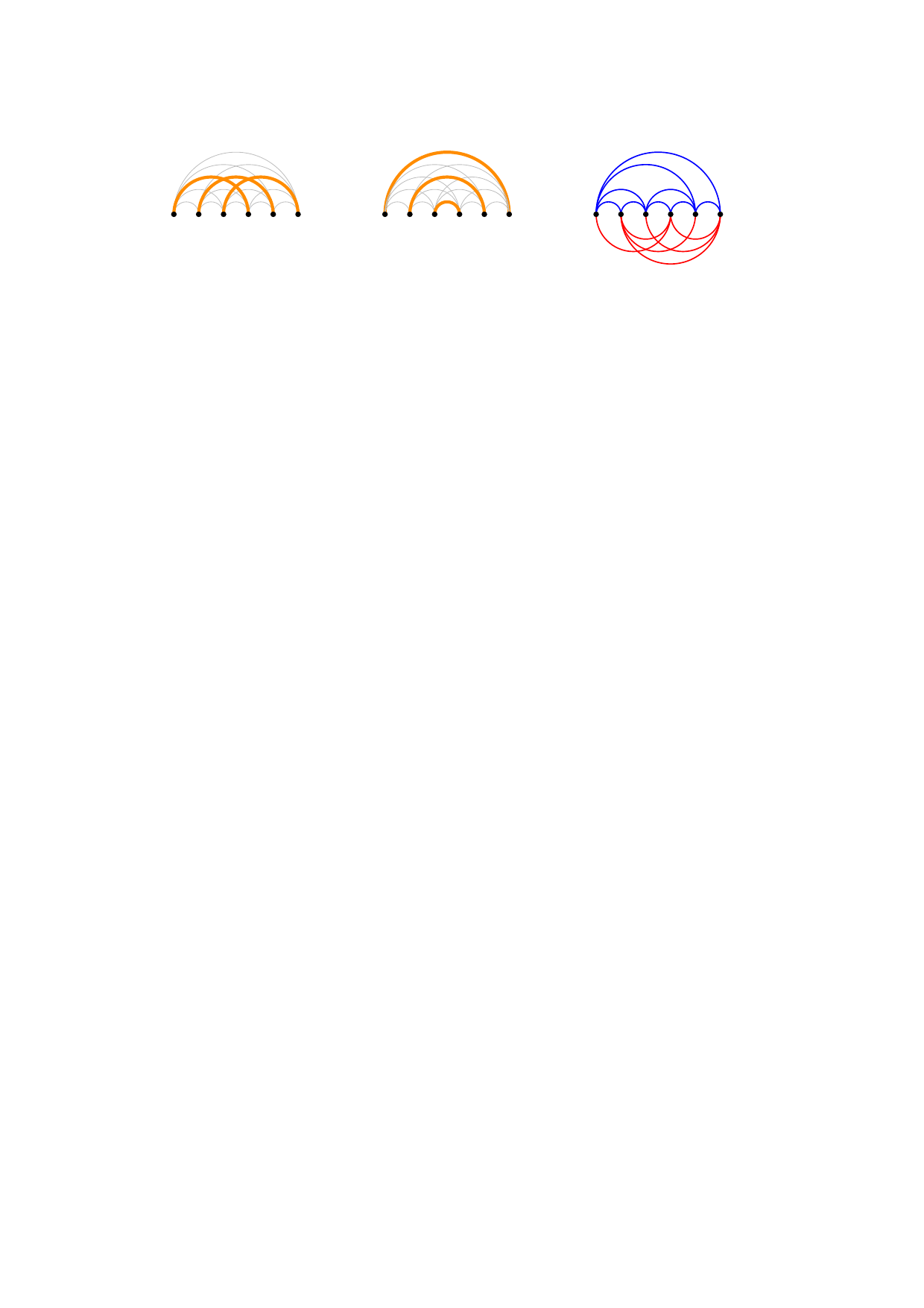}
	\caption{Linear layouts of $K_6$ verifying that $\sn(K_6) \geq 3$ due to a $3$-twist (left), $\qn(K_6) \geq 3$ due to a $3$-rainbow (middle), and $\mn(K_6) \leq 2$ due to a $1$-stack $1$-queue layout (right).}
	\label{fig:K6}
\end{figure}

Turning to mixed layouts, it follows from the definition of the mixed number that for every graph $G$ we have $\mn(G) \le \sn(G)$ and $\mn(G) \le \qn(G)$.
However for some graphs $G$, their mixed number $\mn(G)$ is strictly less than $\sn(G)$ and $\qn(G)$; for example, for the complete graph $K_6$ it holds that $\sn(K_6) = \qn(K_6) = 3$ but $\mn(K_6) = 2$, as illustrated in \cref{fig:K6}.
In particular, graphs with bounded mixed number potentially form a strictly larger class than those of bounded stack number.
Thus, the following problem of Alam et al.~\cite{MDALAM2022} asks for something (potentially) stronger than \cref{open:rel1}.

\begin{open}[Alam et al.~\cite{MDALAM2022}]{\ \\}
	\label{open:rel2}
	Do graphs with bounded mixed number have bounded queue number?
\end{open}

As one of our results (cf.~\cref{thm:equiv}), we shall show that \cref{open:rel1,open:rel2} are in fact equivalent.
That is, either both questions have a \textsc{Yes}-answer or both have a \textsc{No}-answer.
In that sense, it is easier to prove a \textsc{No}-answer by finding graphs of unbounded queue number but bounded mixed number (instead of bounded stack number).
As a second result, also in \cref{thm:equiv}, we shall prove that it indeed suffices to look for graphs of mixed number~$2$; specifically, graphs that admit $1$-stack $1$-queue layouts.
That is, \cref{open:rel1,open:rel2} have a \textsc{Yes}-answer if and only if all $1$-stack $1$-queue graphs have bounded queue number.

\subparagraph*{Separated Linear Layouts}
Since \cref{open:rel1,open:rel2} are likely very challenging in their full generality, we investigate a special case for \df{separated mixed} layouts of bipartite graphs.
A vertex order $\sigma$ of a bipartite graph $G$ with bipartition\footnote{$(A,B)$ is a bipartition if $A \cap B = \emptyset$, $A \cup B = V$, and both induced subgraph $G[A],G[B]$ are edgeless.} $(A,B)$ is \df{separated} if all vertices in $A$ precede all vertices in $B$ (or vice versa).
This naturally gives rise to separated $k$-stack, separated $k$-queue, and separated $s$-stack $q$-queue layouts of bipartite graphs $G$, as well as the corresponding separated stack number $\ssn(G)$, separated queue number $\sqn(G)$, and separated mixed number $\smn(G)$.

Observe that a separated $s$-stack $q$-queue layout can be transformed into a separated $q$-stack $s$-queue layout by reversing the vertex order of one of the parts.
It follows that $\ssn(G) = \sqn(G)$ for every bipartite graph $G$.
Furthermore, every separated vertex order $\sigma$ of $G = (V,E)$ with bipartition $(A,B)$ induces an injective mapping of the edges of $G$ to points of the $|A| \times |B|$ integer grid; see \cref{fig:grid}.
This grid is sometimes referred to as a \df{reduced adjacency matrix} of $G$.
One can easily verify that a subset $S \subseteq E$ of edges forms a queue (resp.~a stack) if and only if the corresponding points form a monotonically increasing (resp.~decreasing) subset in the $|A|\times|B|$ grid.
The stack-queue transformation mentioned above then corresponds to mirroring the grid along the $x$-axis or along the $y$-axis.

\begin{figure}[!tb]
	\begin{subfigure}[b]{.45\linewidth}
		\center
		\includegraphics[page=1]{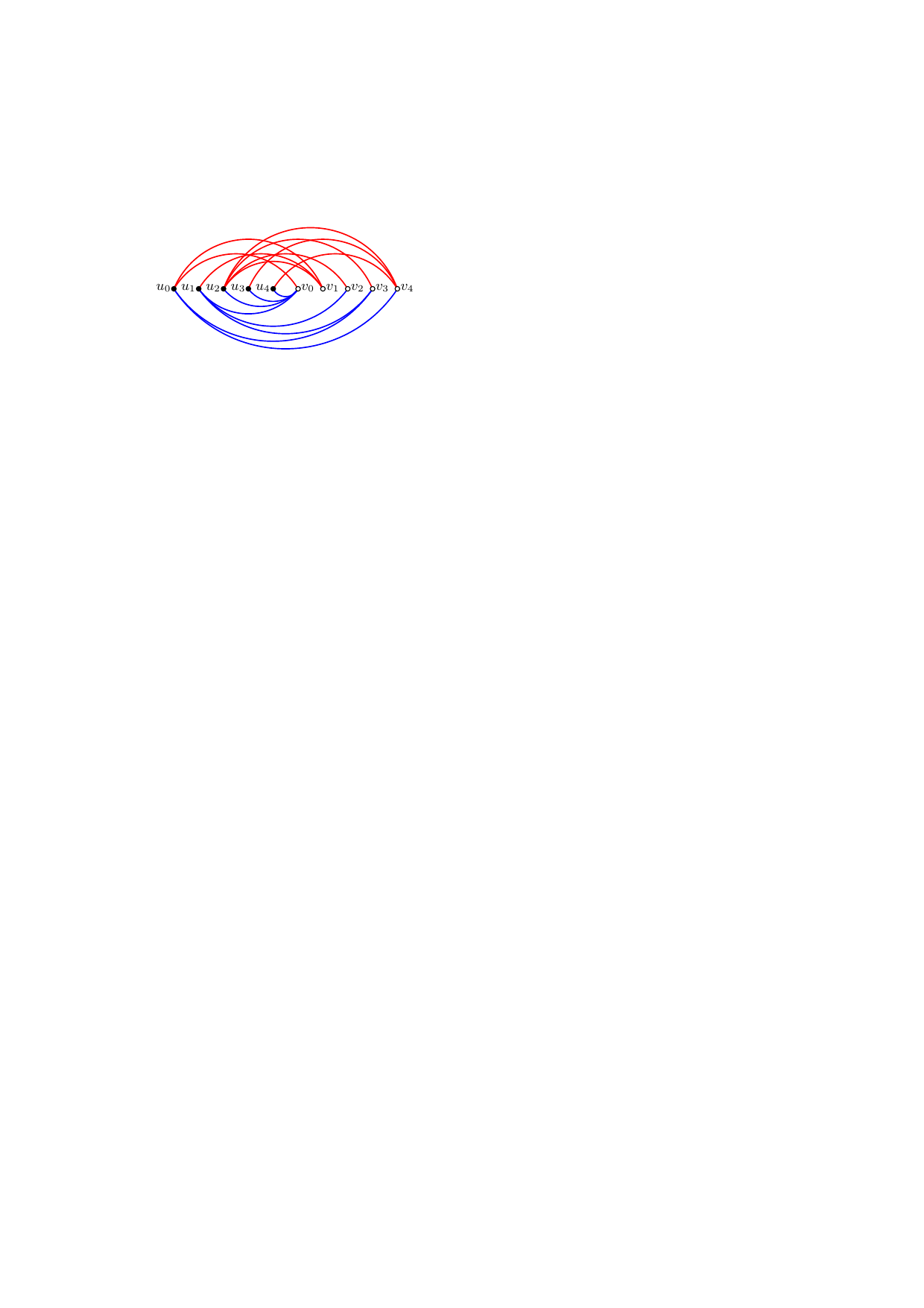}
		\subcaption{separated layout}
	\end{subfigure}
	\hfill
	\begin{subfigure}[b]{.24\linewidth}
		\center
		\includegraphics[page=2]{figures/grid}
		\subcaption{\nolinenumbers{}$2$-layer drawing}
	\end{subfigure}
	\hfill
	\begin{subfigure}[b]{.29\linewidth}
		\center
		\includegraphics[page=3]{figures/grid}
		\subcaption{\nolinenumbers{}grid representation}
	\end{subfigure}
	\caption{Various representations of a separated $1$-stack $1$-queue bipartite graph.}
	\label{fig:grid}
\end{figure}

This separated setting has been widely studied (sometimes, implicitly) under different names,
such as \emph{2-track layouts}~\cite{DPW04,Pupyrev20} or \emph{2-layer drawings}~\cite{Nag05,ALFS20,EW94b}.
For the present paper, let us introduce the following special variant of \cref{open:rel2}.

\begin{open}{\ \\}
	\label{open:rel3}
	Do bipartite graphs with bounded {\bf separated} mixed number have bounded queue number?
\end{open}

\subparagraph*{Our Contributions}

We make progress towards the resolution of \cref{open:rel1,open:rel2}, as well as our new \cref{open:rel3}.
Each of these problems asks whether graphs with low stack number (\cref{open:rel1}), low mixed number (\cref{open:rel2}), or low separated mixed number (\cref{open:rel3}) have also low queue number.
Formally, we say that the \emph{queue number is bounded by stack number} (similarly, \emph{bounded by mixed number}, and \emph{bounded by separated mixed number}) if for every graph $G$ with stack number $\sn(G)$ and queue number $\qn(G)$, it holds that $\qn(G) \le f(\sn(G))$ for some function $f$ (independent of $G$).
The question of whether such functions, $f$, exist are \cref{open:rel1,open:rel2,open:rel3}.

Clearly, a function $f$ that bounds the queue number in terms of the stack number exists if and only if for all $k \in \mathbb{N}$ there is a constant $C = C_k$ such that every $k$-stack graph has queue number at most $C$.
Surprisingly, it is enough to consider just one particular value for $k$.
In fact, Dujmovi\'c and Wood~\cite{DW05} show that \cref{open:rel1} is equivalent to the~existence of a constant $C$ such that every $3$-stack graph has queue number at most~$C$.
(It is known that $1$-stack and $2$-stack graphs have queue number at most $2$~\cite{HLR92} and at most~$42$~\cite{BGR21}, respectively.)

Here, we extend the list to four equivalent statements, showing in particular that \cref{open:rel1,open:rel2} are equivalent, and that it is enough to consider $1$-stack $1$-queue graphs.

\begin{restatable}{theorem}{NonSeparatedEquivalent}
	\label{thm:equiv}
	The following are equivalent:
	\begin{enumerate}[(1)]
		\item \label{equ:nonsep-1} every $s$-stack graph has queue number at most $f(s)$ for some function $f$\\
		(``queue number is bounded by stack number'');

		\item \label{equ:nonsep-2} every $3$-stack graph has queue number at most $C$ for some constant $C$;

		\item \label{equ:nonsep-3} every $1$-stack $1$-queue graph has queue number at most $C$ for some constant $C$;

		\item \label{equ:nonsep-4} every $s$-stack $q$-queue graph has queue number at most $f(s,q)$ for some function $f$\\
		(``queue number is bounded by mixed number'').
	\end{enumerate}
\end{restatable}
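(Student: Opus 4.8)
The plan is to combine the known equivalence of (1) and (2) with two conversion steps that translate between \emph{pure} stack layouts and \emph{mixed} layouts, where the only real engine is the operation of \emph{subdividing edges}. Note first that several implications are immediate: (1) trivially implies (2) (take $s=3$), while both (4)~$\Rightarrow$~(1) and (4)~$\Rightarrow$~(3) hold because an $s$-stack graph is an $s$-stack $1$-queue graph (with an empty queue), and a $1$-stack $1$-queue graph is the special case $s=q=1$. Moreover, the equivalence (1)~$\Leftrightarrow$~(2) is exactly the result of Dujmovi\'{c} and Wood~\cite{DW05} quoted above. Hence it remains to link the pure statements $\{(1),(2)\}$ with the mixed statements $\{(3),(4)\}$, for which I would establish two new implications: a ``mixed-is-no-harder-than-pure'' direction, say (1)~$\Rightarrow$~(4), and a ``$1$-stack-$1$-queue-is-already-hard'' direction, say (3)~$\Rightarrow$~(2), which together with the trivial arrows close the cycle $(1)\Leftrightarrow(2)\Leftrightarrow(3)\Leftrightarrow(4)$.

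For the direction (1)~$\Rightarrow$~(4), I would take an $s$-stack $q$-queue layout of a graph $G$ and turn every queue into a bounded number of stacks by subdividing edges. Concretely, one keeps each of the $s$ stacks a stack (subdividing a stack edge and placing the division vertex next to an endpoint preserves non-crossing), and for each of the $q$ queues one inserts division vertices so that the formerly non-nesting edges become non-crossing; a single queue turns into $\Oh(1)$ stacks this way. The resulting graph $D$ is a subdivision of $G$ with $\Oh(s+q)$ stacks, so by~(1) its queue number is at most $f(\Oh(s+q))$. Finally I would invoke the stability of queue number under subdivision --- un-subdividing increases $\qn$ by at most a bounded function when each edge is subdivided a bounded number of times~\cite{DW05} --- to conclude $\qn(G)\le g(s,q)$, which is~(4).

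The direction (3)~$\Rightarrow$~(2) runs the same machine backwards and is where the substance lies. Starting from a $3$-stack graph $G$, I would produce a $1$-stack $1$-queue subdivision: keep one stack untouched and, again by suitably placed division vertices, \emph{transform the remaining stacks into a single queue}, exploiting that non-crossing edges can be made non-nesting after subdivision. This yields a subdivision $D$ of $G$ that is a $1$-stack $1$-queue graph, so by~(3) we have $\qn(D)\le C$, and subdivision-stability of queue number again transfers a constant bound back to $G$, giving~(2). Together with (1)~$\Leftrightarrow$~(2) and the trivial implications, all four statements are then equivalent.

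The main obstacle, common to both conversion steps, is to perform these stack/queue interconversions \emph{globally and economically at the same time}: the division vertices must be inserted into one common vertex order that simultaneously (i) keeps every retained stack a stack, (ii) realises every intended queue-to-stack (resp.\ stack-to-queue) conversion, and --- crucially --- (iii) uses only $\Oh_{s,q}(1)$ division vertices per edge, \emph{independent of the number of vertices of $G$}. The last point is essential: only a bound on the subdivision depth that depends on $s$ and $q$ alone lets subdivision-stability return an absolute constant (rather than something growing with $|V(G)|$), which is precisely what statements (2) and (3) demand. Designing one order that meets all three constraints --- in particular collapsing several stacks into a \emph{single} queue with bounded subdivision depth --- is the technically delicate part of the argument.
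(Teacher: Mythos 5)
Your logical skeleton is sound and closes the equivalence, but it routes the cycle differently from the paper: you prove \eqref{equ:nonsep-1}~$\Rightarrow$~\eqref{equ:nonsep-4} and \eqref{equ:nonsep-3}~$\Rightarrow$~\eqref{equ:nonsep-2}, whereas the paper proves \eqref{equ:nonsep-2}~$\Rightarrow$~\eqref{equ:nonsep-4} and \eqref{equ:nonsep-3}~$\Rightarrow$~\eqref{equ:nonsep-4} directly, via two new lemmas (\cref{lm:subdiv_mixed_3,lm:subdiv_mixed_11}) showing that every $s$-stack $q$-queue graph has a $3$-stack, respectively a $1$-stack $1$-queue, subdivision with $\Oh(\log \max(s,q))$ division vertices per edge. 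Your decomposition is in fact more economical in one place: for \eqref{equ:nonsep-3}~$\Rightarrow$~\eqref{equ:nonsep-2} you only need that a $3$-stack graph has a $1$-stack $1$-queue subdivision of bounded depth, which is \emph{verbatim} the quoted result of Dujmovi\'c and Wood~\cite{DW05} ($4\lceil\log_2 s\rceil$ division vertices per edge); no new construction is required there, only the contraction step (\cref{lm:contraction}, which is the precise form of the ``subdivision-stability'' you invoke). Your vague description of that step (``keep one stack, turn the rest into a single queue'') does not match how the construction actually works, but since you can cite the result as a black box this is harmless.

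The genuine gap is in \eqref{equ:nonsep-1}~$\Rightarrow$~\eqref{equ:nonsep-4}: you need that every $s$-stack $q$-queue graph has a subdivision, with a number of division vertices per edge depending only on $s$ and $q$, that is a pure $f(s,q)$-stack graph, and you explicitly leave the simultaneous placement of division vertices as an unresolved ``obstacle.'' Moreover, your local heuristic (``a single queue turns into $\Oh(1)$ stacks by one well-placed division vertex per edge'') is not a proof, and handling $q$ queues and $s$ stacks in one common order is exactly where the work lies. The gap is closeable, and more cheaply than you fear: apply the Dujmovi\'c--Wood queue-to-$3$-stack subdivision to the queue subgraph $G^q$ alone. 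That construction (\cref{lm:t_subdiv,lm:t_layout}) preserves the order $\sigma$ of the original vertices in the root bag, so the $s$ unsubdivided stacks of $G$ remain stacks in the new order, yielding an $(s+3)$-stack subdivision with $2\lceil\log_2 q\rceil+1$ division vertices per (queue) edge; this suffices to invoke \eqref{equ:nonsep-1} and then \cref{lm:contraction}. The paper goes further and merges the tree-layouts of the stack part and the queue part at the common root bag so as to land on exactly $3$ stacks (needed because it works from hypothesis \eqref{equ:nonsep-2} rather than \eqref{equ:nonsep-1}), which is the content you would be missing if you wanted the stronger statement.
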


\medskip

Turning to separated layouts of (necessarily bipartite) graphs, our main contribution is also a list of four equivalent statements, one of which, namely \eqref{equ:sep-1}, is \cref{open:rel3}.
Statements \eqref{equ:sep-3} and \eqref{equ:sep-4} show that it is enough to consider $1$-stack $q$-queue graphs, respectively even only $1$-stack $6$-queue graphs, for solving \cref{open:rel3}.
Additionally, we discuss a natural approach for \cref{open:rel3} where we start with the grid representation of a separated $s$-stack $q$-queue layout, and then permute the rows and columns so as to obtain a separated $f(s,q)$-queue layout.
Another surprising contribution is that it is always enough to apply the same permutation to the rows and the columns, which is statement \eqref{equ:sep-2}.

\begin{restatable}{theorem}{SeparatedEquivalent}
	\label{thm:same_order}
	The following are equivalent:
	\begin{enumerate}[(1)]
		\item \label{equ:sep-1} every separated $s$-stack $q$-queue graph $G$ has queue number at most $f(s,q)$ for some function~$f$ \qquad (``queue number is bounded by mixed number'');

		\item \label{equ:sep-2} every separated $s$-stack $q$-queue layout of a graph $G=(A \cup B, E)$ with $|A| = |B|$ can be transformed into a separated $f(s,q)$-queue layout for some function $f$ by applying the same permutation to $A$ and $B$;

            \item \label{equ:sep-3} every separated $1$-stack $q$-queue graph has queue number at most $f(q)$ for some function~$f$;

            \item \label{equ:sep-4} every separated $1$-stack $6$-queue graph has queue number at most $C$ for some constant $C$.
	\end{enumerate}
\end{restatable}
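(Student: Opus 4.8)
The plan is to work throughout in the grid (reduced adjacency matrix) picture, where a separated layout of a bipartite $G=(A\cup B,E)$ is a choice of a row order on $A$ and a column order on $B$, a queue is a monotonically increasing set of points, and a stack is a monotonically decreasing set of points; recall that $\ssn(G)=\sqn(G)$ and $\qn(G)\le\sqn(G)$. Two of the implications are immediate: $(1)\Rightarrow(3)$ is the specialization $s=1$, and $(3)\Rightarrow(4)$ is the specialization $q=6$. The direction $(2)\Rightarrow(1)$ is a padding argument: given a separated $s$-stack $q$-queue graph with $|A|\ne|B|$, I would add isolated vertices to the smaller part to reach $|A|=|B|$ without changing the edge set or the queue number, apply $(2)$ to obtain a same-permutation separated layout with $f(s,q)$ queues, and observe that such a layout is in particular a separated layout, so $\sqn(G)\le f(s,q)$ and hence $\qn(G)\le\sqn(G)\le f(s,q)$. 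It then remains to close the loop, for which I would prove $(4)\Rightarrow(3)\Rightarrow(1)$ together with $(1)\Rightarrow(2)$.

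For the two reductions $(4)\Rightarrow(3)$ (fewer queues) and $(3)\Rightarrow(1)$ (fewer stacks), the strategy is the same: given a separated layout of $G$ in the richer class, build an auxiliary graph $G'$ lying in the simpler class (a separated $1$-stack $6$-queue, respectively $1$-stack $q$-queue, graph) that encodes $G$ so tightly that any separated queue layout of $G'$ pulls back to one of $G$ with only a bounded blow-up; invoking the hypothesis on $G'$ then bounds $\sqn(G)$. For the queue reduction I expect a divide-and-conquer on the increasing chains combined with a Dujmovi\'{c}--Wood-style trick, with the target constant $6$ arising as a ``$3$'' (mirroring their reduction to $3$-stack graphs) doubled through the stack--queue duality $\ssn=\sqn$. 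For the stack reduction I would encode the $s$ decreasing chains by a substitution or product construction that simulates many stacks with a single one in the blown-up instance. The essential point---and the reason a naive induction on the number of stacks fails---is that one cannot simply superimpose the individually good layouts of edge-disjoint subgraphs, because they use different vertex orders; the auxiliary graph must itself force a single common order, which is exactly what the encoding has to accomplish.

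For $(1)\Rightarrow(2)$ I would use a diagonal-matching trick. Fix the identification $a_i\leftrightarrow b_i$ supplied by the starting layout and set $H=G+M$ with $M=\{a_ib_i : i\in[n]\}$. In the starting layout $M$ occupies the main diagonal of the grid, hence forms a single queue, so $H$ is still separated $s$-stack $(q+1)$-queue and by the hypothesis admits a separated layout with some $K\le f(s,q+1)$ queues; let $\alpha$ and $\beta$ be the induced row and column orders. Since $M$ alone needs at most $K$ queues in this layout, the permutation $\tau=\beta\alpha^{-1}$ relating the two orders on matched pairs has longest decreasing subsequence at most $K$, so by Dilworth (the Erd\H{o}s--Szekeres dual) its domain splits into at most $K$ runs on which $\tau$ is increasing. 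Now I keep the row order $\alpha$ and reorder the columns by $\alpha$ as well, which is a same-permutation layout; within each of the $K$ runs the column order is unchanged, so every one of the $\le K$ queues survives as at most $K$ increasing chains. This yields a same-permutation separated layout of $G\subseteq H$ with at most $K^2\le f(s,q+1)^2$ queues, as required by $(2)$.

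The hard part will be the stack reduction $(3)\Rightarrow(1)$: stacks are the difficult side of the whole area---Open Problem~\ref{open:rel1} is precisely about stacks---and encoding an $s$-stack instance inside a $1$-stack instance so that a queue layout transfers back is where the difficulty concentrates, whereas the queue reduction to the specific constant $6$ is delicate but more mechanical. A secondary point to handle with care is the gap between ordinary and separated queue number: the constructions above naturally bound $\sqn$, and while $\qn(G)\le\sqn(G)$ immediately links the separated statements to \cref{open:rel3}, passing from an ordinary queue-number hypothesis back to a separated layout (as the diagonal-matching step requires) means either reading statements $(1)$, $(3)$, $(4)$ as bounds on $\sqn$ throughout, or supplying an extra argument that recovers a separated layout within the mixed class.
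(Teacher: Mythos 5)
Your easy implications and your $(1)\Rightarrow(2)$ argument are sound and essentially match the paper: \cref{lm:sep-2} likewise adds the identity queue $M$, applies the hypothesis to the resulting separated $s$-stack $(q+1)$-queue graph, and restores the diagonal by sorting the columns according to their matched rows; your direct count of $f(s,q+1)^2$ queues (the paper gets $2f(s,q+1)^2$ via the Riffle-Lemma, \cref{lm:321}) is fine. Your worry about $\qn$ versus $\sqn$ is legitimate but is discharged in one line by \cref{cor:separation}, which converts any bound on the ordinary queue number of a bipartite graph into a separated one at the cost of a factor~$2$.

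The genuine gap is that the two substantive reductions, $(3)\Rightarrow(1)$ and $(4)\Rightarrow(3)$, are left as strategy statements (``a substitution or product construction'', ``a divide-and-conquer combined with a Dujmovi\'c--Wood-style trick'') without the constructions that make them work, and these are the heart of the theorem. For $(3)\Rightarrow(1)$ the paper's \cref{lm:sep-3} subdivides every edge $(a_i,b_j)$ of each stack $S_k$, $1\le k\le s-1$, into a path $a_i,u^k_i,v^k_j,b_j$ and places the new row and column blocks so that $A\times B, U^1\times V^1,\dots,U^{s-1}\times V^{s-1}$ sit on a diagonal of the enlarged matrix: the $s$ decreasing chains then concatenate into a \emph{single} stack, the connector edges contribute one new queue per $k$, and the original queues are untouched, yielding a separated $1$-stack $(s+q-1)$-queue graph $H$ of which $G$ is a $1$-shallow minor; \cref{lm:contraction} then gives $\qn(G)\le 24\,\qn(H)^3$. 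Your sketch does not identify either the diagonal-block alignment that merges the stacks or the shallow-minor pullback that makes the encoding ``tight''. For $(4)\Rightarrow(3)$ the paper's \cref{lm:mn6} leaves the single stack unsubdivided and subdivides only the queue edges ($2\lceil\log_2 q\rceil$ times each) using the tree-partition machinery of \cref{lm:t_subdiv,lm:t_layout} with a complete binary tree laid out as one queue by BFS, obtaining a $3$-queue layout of the subdivided queue part; the constant $6$ then arises from \cref{cor:separation} doubling $3$ to make the layout separated --- not, as you guess, from the stack--queue duality $\ssn=\sqn$. Without these two constructions the cycle of implications is not closed, so the proposal as written does not prove the theorem.
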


\medskip

Finally, let us discuss the connections between non-separated and separated layouts.
Standard techniques for queue layouts (which we present in \cref{sec:preliminaries}) easily give that \cref{open:rel2} implies \cref{open:rel3}.
In fact, \cref{cor:separation} states that for every bipartite graph $G$ we have $\sqn(G) \leq 2\qn(G)$.
However, it remains open whether \cref{open:rel3} implies \cref{open:rel2} (or equivalently \cref{open:rel1}).
The situation is summarized in \cref{fig:overview}.

\begin{figure}[!tb]
    \centering
    \includegraphics[height=7cm]{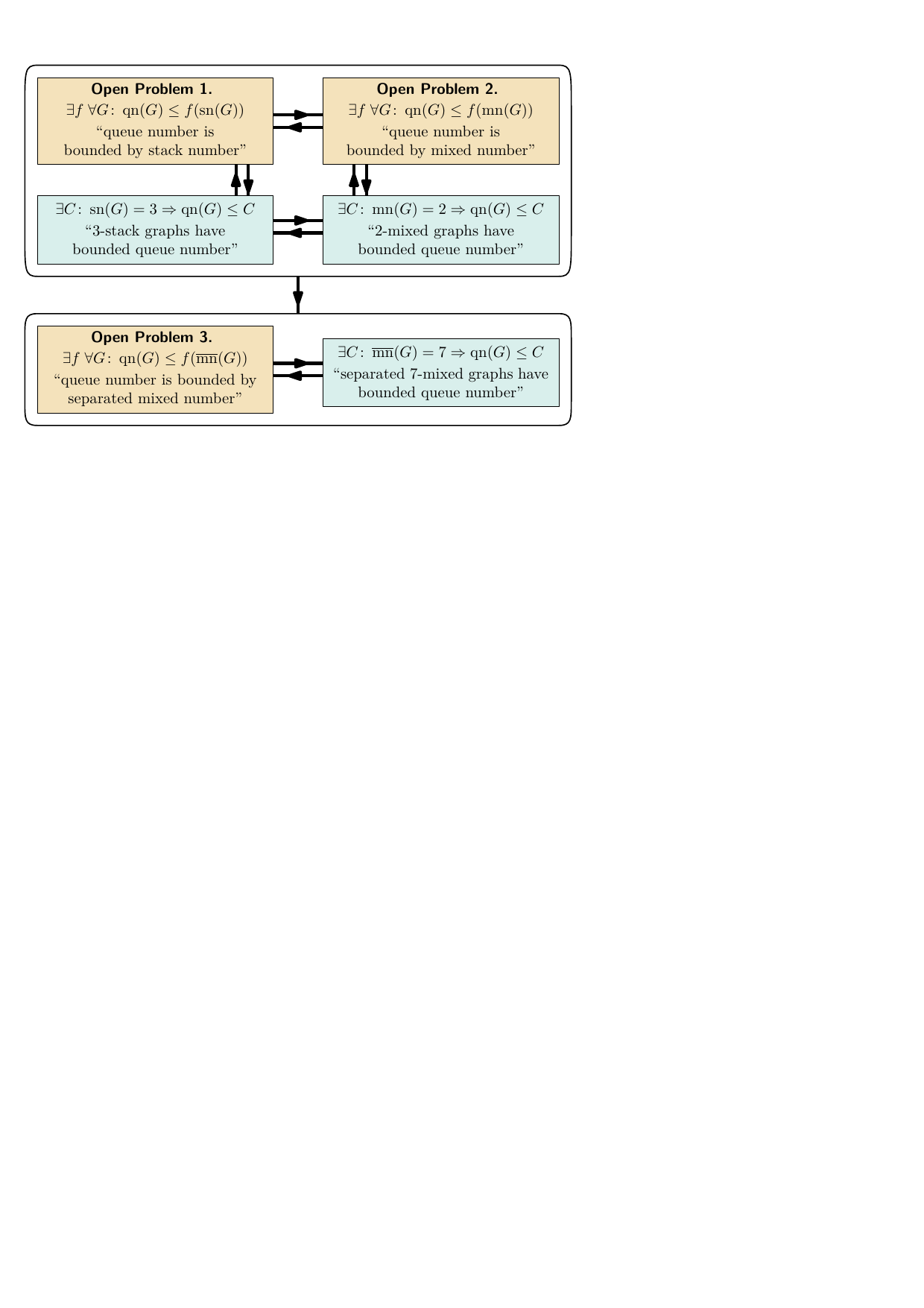}
    \caption{An overview of the new and known relationships between different linear layouts.}
    \label{fig:overview}
\end{figure}

\section{Preliminaries}
\label{sec:preliminaries}

In this section we collect several facts about manipulating vertex orders in linear layouts,
which keep the stack and/or queue numbers within a constant factor from each other.

\begin{lemma}[Riffle-Lemma]\label{lm:321}
	Let $G = (V, E)$ be a graph with a given $q$-queue layout using $\sigma$ as the vertex order and $V_1,\dots, V_k$ be a partition of $V$. Then, for any vertex order $\sigma'$ where for vertices $u,v \in V_i$ it holds that $u <_{\sigma'} v$ whenever $u <_{\sigma} v$:
    \begin{enumerate}[(1)]
        \item $G$ admits a $k^2\cdot q$-queue layout.
        \item\label{item:riffle-bipartite} $G$ admits a $2\cdot \ell \cdot (k - \ell) \cdot q$-queue layout if $G$ is bipartite, $\bigcup_{i=1}^\ell V_i = A$ and $\bigcup_{j=\ell+1}^k V_j = B$.
    \end{enumerate}
\end{lemma}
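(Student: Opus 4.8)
The plan is to keep the new order $\sigma'$ fixed and simply redistribute the edges of the given $q$-queue layout into more queues, so that no two edges sharing a new queue nest in $\sigma'$. To this end I would classify each edge $e=(u,v)$ with $u\in V_i$, $v\in V_j$ by its \emph{type} $\{i,j\}$, and—when $i\neq j$—also by an \emph{orientation} inherited from $\sigma$: writing $\alpha(e)$ for the endpoint in $V_i$ and $\beta(e)$ for the endpoint in $V_j$, call $e$ \emph{positive} if $\alpha(e)<_\sigma\beta(e)$ and \emph{negative} otherwise. The new queues will be indexed by triples (original queue, type, orientation), and the whole proof reduces to showing that each such part is nesting-free in $\sigma'$.

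The within-part types $\{i,i\}$ are immediate: any two such edges have all four endpoints in $V_i$, where $\sigma$ and $\sigma'$ induce the same order, so a pair that does not nest in $\sigma$ cannot nest in $\sigma'$. Hence the original $q$ queues remain valid when restricted to one within-part type, giving $q$ queues per part. The substance of the argument is the between-part case, where reordering the parts genuinely changes the picture.

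For the key step I would take two edges $e_1,e_2$ of the same type $\{i,j\}$ with $i\neq j$, the same orientation (say positive, the negative case being symmetric by swapping $V_i$ and $V_j$), and the same original queue, so that they do not nest in $\sigma$, and show they do not nest in $\sigma'$. Relabel so that $\alpha(e_1)<_\sigma\alpha(e_2)$; since within-part orders are preserved, also $\alpha(e_1)<_{\sigma'}\alpha(e_2)$. If $\beta(e_1)<_\sigma\beta(e_2)$, then likewise $\beta(e_1)<_{\sigma'}\beta(e_2)$, and a short case distinction on whether $\alpha(e_1)<_{\sigma'}\beta(e_1)$ shows that the two edges are ``parallel'' in $\sigma'$ and therefore cannot nest. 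The remaining possibility $\beta(e_2)<_\sigma\beta(e_1)$ is ruled out: combined with positivity $\alpha(e_2)<_\sigma\beta(e_2)$ and $\alpha(e_1)<_\sigma\alpha(e_2)$ it forces the $\sigma$-order $\alpha(e_1)<_\sigma\alpha(e_2)<_\sigma\beta(e_2)<_\sigma\beta(e_1)$, i.e.\ $e_2$ nests inside $e_1$ already in $\sigma$, contradicting that they lie in a common original queue. Counting then finishes the proof: the $\binom{k}{2}$ between-types each use $2q$ queues and the $k$ within-types use $q$ each, for $k(k-1)q+kq=k^2q$ in total, which is (1); for (2), bipartiteness with $A=\bigcup_{i\le\ell}V_i$ and $B=\bigcup_{j>\ell}V_j$ means only types $\{i,j\}$ with $i\le\ell<j$ occur, of which there are $\ell(k-\ell)$, each contributing $2q$ queues, for $2\ell(k-\ell)q$.

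I expect the main obstacle to be the orientation bookkeeping rather than the counting. The tempting shortcut of splitting only by type and claiming that same-type edges never nest in $\sigma'$ is simply false—permuting the parts can turn a crossing into a nesting, as already $K_{3,3}$ shows. The real insight is that such a reversal can happen only between edges of \emph{opposite} orientation, which is exactly why one must refine by orientation and then actually invoke non-nesting in $\sigma$ to kill the dangerous anti-parallel configuration. Getting this distinction right, and in particular handling shared endpoints (where nesting is automatically excluded) cleanly, is the delicate part of the write-up.
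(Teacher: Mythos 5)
Your proposal is correct and takes essentially the same route as the paper: your (type, orientation) classes are exactly the paper's sets $E_{i,j}$ indexed by ordered pairs of parts (with the diagonal classes handled by order preservation inside each $V_i$), and your key step---that two same-orientation edges which nest in $\sigma'$ must already be anti-parallel, hence nested, in $\sigma$---is the same contradiction the paper derives. The counts $k^2\cdot q$ and $2\ell(k-\ell)\cdot q$ match as well.
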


\begin{proof}
    We show that every queue $Q \subseteq E$ for $\sigma$ can be split into $k^2$ queues for $\sigma'$;
    see \cref{fig:321-1}.
    Overall, this results in the desired $k^2 \cdot q$-queue layout.
    For $i,j \in [k]$ let $E_{i,j} \subseteq Q$ contain all edges $(u,v) \in Q$ such that $u <_{\sigma} v$ and $u \in V_i$ and $v \in V_j$.
    This partitions $Q$ into $k^2$ many edge sets $E_{i,j}$, and we claim that each $E_{i,j}$ is a queue for $\sigma'$.
    For $i=j$, the relative order of any $u,v \in V_i$ is unchanged in $\sigma'$, and hence $E_{i,i}$ is a queue for $\sigma'$.
    For $i\neq j$, let $e_1 =(u_1, v_1)$ and $e_2 =(u_2,v_2)$ be two edges in $E_{i,j}$ with $u_1, u_2 \in V_i$ and $v_1,v_2 \in V_j$.
    Without loss of generality assume that $u_1 <_{\sigma'} u_2$ (hence $u_1 <_{\sigma} u_2$) and assume for a contradiction that $e_1$ nests $e_2$ in $\sigma'$, that is, $u_1 <_{\sigma'} u_2 <_{\sigma'} v_2 <_{\sigma'} v_1$ or $v_2 <_{\sigma'} v_1 <_{\sigma'} u_1 <_{\sigma'} u_2$.
    In either case, it follows that $v_2 <_{\sigma'} v_1$ and hence $v_2 <_{\sigma} v_1$.
    Together with $u_2 <_\sigma v_2$ (as $(u_2,v_2) \in E_{i,j}$) this yields $u_1 <_{\sigma} u_2 <_{\sigma} v_2 <_{\sigma} v_1$ and $e_1$ nests $e_2$ in $\sigma$ -- a contradiction to $Q$ being a queue. It follows that each $E_{i,j}$ requires at most one queue in the layout with vertex order $\sigma'$, which concludes the first case of the proof.

    In the bipartite case, for each queue $Q$ in the original layout, let $e =(u,v)$ be an edge in $Q$ with $u \in V_i$  with $i \leq \ell$ and $v \in V_j$ with $\ell < j \leq k$. 
    As there are $\ell \cdot (k -\ell)$ combinations of $i$ and $j$, and we require two queues, namely $E_{i,j}$ and $E_{j,i}$, for each combination, the resulting layout with vertex order $\sigma'$ requires $2 \cdot \ell \cdot (k -\ell) \cdot q$-queues~in~total.
\end{proof}

Applying the lemma to bipartite graphs with $\ell=1$ and $k=2$ such that $V_1 = A$ and $V_2 = B$ yields the following (well-known) fact; see \cite{Pemmaraju92} for an alternative proof.

\begin{corollary}
	\label{cor:separation}
    Every $q$-queue graph with bipartition $(A,B)$ has a separated $2q$-queue layout.
\end{corollary}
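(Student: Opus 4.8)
The plan is to invoke the Riffle-Lemma (\cref{lm:321}, its bipartite case~(2)) directly, since that lemma already does all the heavy lifting. I start from a $q$-queue layout of $G$ with some vertex order $\sigma$, which exists because $G$ is a $q$-queue graph. Writing $(A,B)$ for the bipartition, I set $k=2$, $\ell=1$, $V_1 = A$, and $V_2 = B$, so that indeed $\bigcup_{i=1}^{\ell} V_i = A$ and $\bigcup_{j=\ell+1}^{k} V_j = B$, matching the hypotheses of the lemma.

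The only ingredient I still need to supply is the target order $\sigma'$. I take $\sigma'$ to be the \emph{separated} order that places all of $A$ before all of $B$, while keeping inside each part the relative order inherited from $\sigma$. By construction $\sigma'$ meets the requirement of the Riffle-Lemma: for any two vertices $u,v$ lying in the same part $V_i$ we have $u <_{\sigma'} v$ exactly when $u <_{\sigma} v$, because $\sigma'$ only reshuffles vertices across the two parts and never reorders vertices within a single part. Hence the lemma applies and yields that $G$ admits a $2 \cdot \ell \cdot (k-\ell) \cdot q = 2 \cdot 1 \cdot (2-1) \cdot q = 2q$-queue layout with respect to $\sigma'$. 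Since $\sigma'$ is separated by its very definition, this is a separated $2q$-queue layout, which is exactly the claim.

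As for obstacles, there really is none beyond the bookkeeping: the combinatorial core — that splitting each original queue according to which parts its endpoints lie in produces valid queues for the new order — is precisely what \cref{lm:321} establishes, so I may assume it. The one point worth double-checking is that passing from $\sigma$ to the separated order $\sigma'$ is a legitimate ``riffle,'' i.e.\ that it preserves intra-part orders; this holds trivially here because there are only two parts and each is kept internally intact. I would also note that the ``or vice versa'' clause in the definition of a separated layout means it is irrelevant whether $A$ or $B$ is placed first, so no further case distinction is needed.
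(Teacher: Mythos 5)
Your proof is correct and follows exactly the paper's own argument: the authors likewise obtain the corollary by applying the Riffle-Lemma (part (2)) with $\ell=1$, $k=2$, $V_1=A$, $V_2=B$ to a separated order that preserves the intra-part orders of $\sigma$. Nothing is missing.
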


\begin{figure}[!tb]
	\begin{subfigure}[b]{.47\linewidth}
            \center
		\includegraphics[page=1]{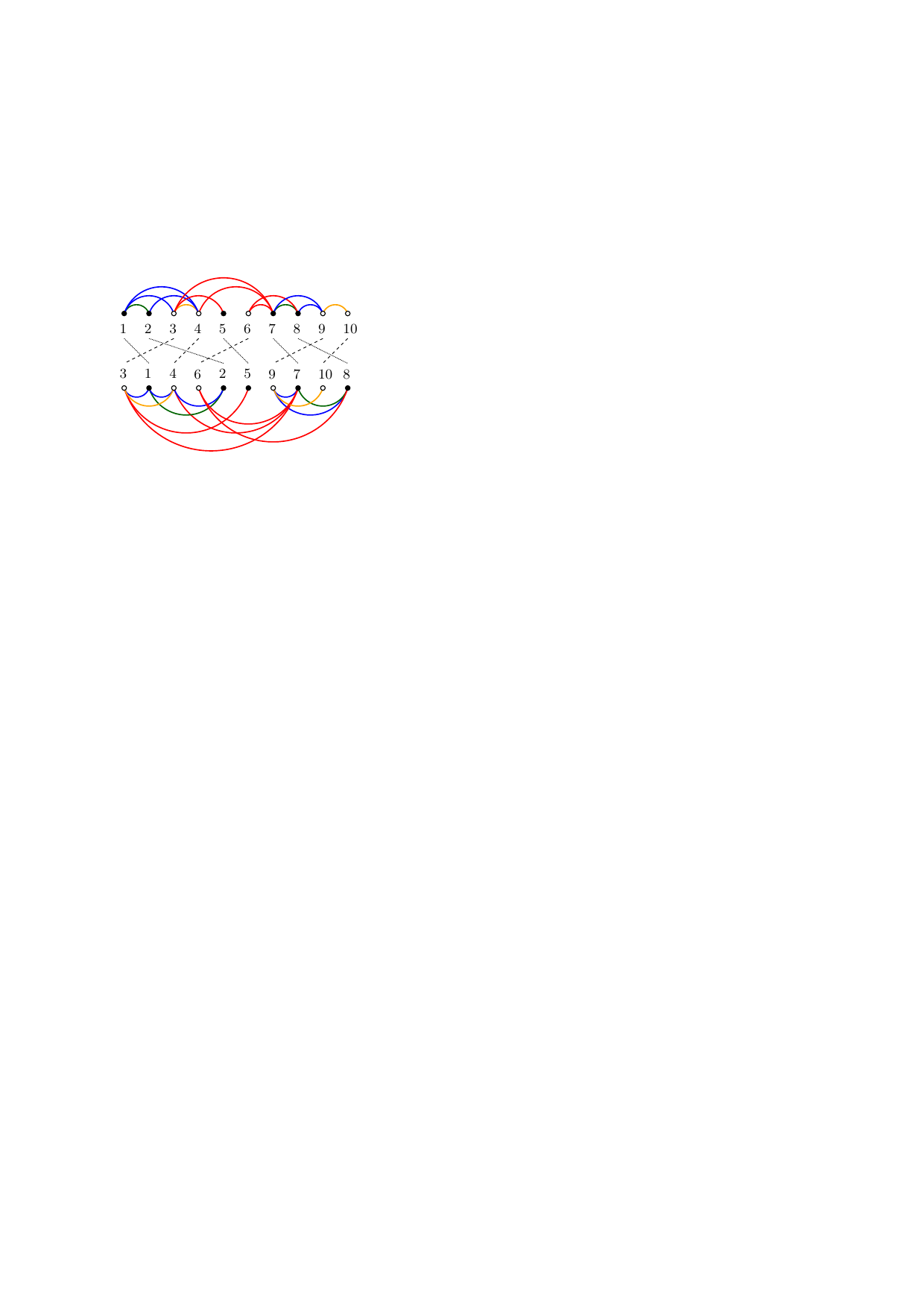}
		\subcaption{}
		\label{fig:321-1}
	\end{subfigure} \hfill
	\begin{subfigure}[b]{.38\linewidth}
            \center
		\includegraphics[page=2]{figures/riffle}
		\subcaption{}
		\label{fig:321-2}
	\end{subfigure}
	\caption{\textbf{(a)} Illustrating \cref{lm:321} (1) and its tightness for $k=2$ \textbf{(b)}. Edge colors illustrate the partition of the edges with respect to $V_1$ (black) and $V_2$ (white). The black lines signify the orders within $V_1$ (dotted) and $V_2$ (dashed).}
	\label{fig:321}
\end{figure}

\medskip

The next two results concern graph subdivisions. A \df{subdivision} of a graph $G$ is a graph obtained from $G$
by replacing each edge $(u, v)$ in $G$ by a path with one or more edges.
Internal vertices on such a path are called \df{division vertices}.

\begin{lemma}[Lemma~2 in \cite{DW05}]
	\label{lm:sub}
	Let $G = (V, E)$ be a graph and $G' = (V', E')$ be a subdivision of $G$ with at most one division vertex per edge.
	If $G$ admits a $q$-queue layout with vertex order $\sigma$, then $G'$ with bipartition $V$ and $V' \setminus V$ admits a separated $(q+1)$-queue layout in which $V$ is ordered as in $\sigma$.
\end{lemma}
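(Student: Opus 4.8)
The plan is to subdivide each edge of $G$ exactly once — which is in fact forced here, since for $(V, V'\setminus V)$ to be a bipartition every edge of $G'$ must run between an original vertex and a division vertex — and then to build the separated layout directly in the grid (reduced adjacency matrix) picture described above, where a set of edges forms a queue exactly when the corresponding grid points are monotonically increasing. I keep $V$ on one side, ordered by $\sigma$, and place all division vertices on the other side; it then remains to choose their order and to $(q+1)$-color the edges. For an edge $e = (u,v)$ of $G$ with $u <_\sigma v$, the division vertex $w_e$ is incident to a \emph{near} edge $(u, w_e)$ and a \emph{far} edge $(v, w_e)$. The strategy is to route all near edges into a single fresh queue, and to distribute the far edges among $q$ queues by reusing the queue assignment of the given layout of $G$.

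Concretely, I would order the division vertices by the pair $(u,v)$ — smaller endpoint first, larger endpoint as a tie-breaker — which is a well-defined total order because $G$ is simple (so the pair $(u,v)$ determines $e$ uniquely). First I would check the near edges: in the grid, the near edge of $e$ occupies the column of $u$ and the row given by the rank of $w_e$, and since that rank is nondecreasing in the position of $u$, the near edges are monotonically increasing and hence form one queue. Next, for the far edges I would fix one original queue $Q$ of $G$ and consider its far edges: for $e, e' \in Q$ with $w_e$ preceding $w_{e'}$ in the new order we have $u \le_\sigma u'$, and since $Q$ is nesting-free the edge with the smaller left endpoint also has the smaller right endpoint, i.e.\ $v \le_\sigma v'$; thus the corresponding grid points (column of $v$, row $\mathrm{rank}(w_e)$) are again monotonically increasing, so the far edges of $Q$ form a single queue. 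Summing over the $q$ original queues and the one near-edge queue yields the claimed $(q+1)$-queue separated layout, with $V$ still ordered by $\sigma$.

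The main obstacle is pinning down the order of the division vertices so that the near edges and the far edges are \emph{simultaneously} well-behaved, together with the careful bookkeeping for edges that share an endpoint. The tie-breaking rule (ordering the $w_e$'s with a common smaller endpoint by their larger endpoint) is exactly what makes the far-edge argument go through when $u = u'$, and one must invoke the precise, strict definition of nesting to confirm that two edges meeting at a shared vertex never nest, so that ties never cost an extra queue. Once the ordering and this case analysis are in place, both monotonicity claims reduce to the elementary ``left-sorted implies right-sorted'' property of a nesting-free edge set, and the bound $q+1$ follows.
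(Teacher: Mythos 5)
Your proof is correct: ordering the division vertices lexicographically by (smaller endpoint, larger endpoint) under $\sigma$, putting all near edges into one fresh queue, and letting the far edges inherit the original queue assignment is exactly the standard argument for this lemma, and your verification (non-nesting of a queue gives $v \le_\sigma v'$ whenever $u <_\sigma u'$, with shared endpoints handled by the strictness of the nesting definition) is sound. The paper itself does not reprove this statement but only cites Lemma~2 of Dujmovi\'c and Wood \cite{DW05}, whose proof proceeds along essentially the same lines as yours.
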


A converse operation to a subdivision is a (path-)contraction. We use a more general transformation here.
An \df{r-shallow minor} is a restricted form of a graph minor in which
each (connected) subgraph that is contracted to form the minor has radius at most $r$, where the radius is defined as the minimum of the eccentricities of its vertices. For an $s$-stack $q$-queue graph $G$, we will combine the following result with finding an $s'$-stack $q'$-queue subdivision $H$ where $s'\leq s$ and $q' \leq q$, which has $G$ as an $r$-shallow minor in order to prove equivalence statements in~\cref{thm:equiv} and~\cref{thm:same_order}.

\begin{lemma}[Lemma~12 in \cite{HW21}]
	\label{lm:contraction}
	For every graph $G$ and $H$, where $G$ is a $r$-shallow minor of $H$, it holds that
	$\qn(G) \le (2r + 1) \big(2 \qn(H) \big)^{2r + 1}$.
\end{lemma}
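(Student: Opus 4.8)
The plan is to reduce the statement to a bound on the queue number of a bounded power of $H$, and then to bound that power's queue number by reusing $H$'s own queue layout. Let $\{X_v\}_{v\in V(G)}$ be the branch sets witnessing that $G$ is an $r$-shallow minor of $H$: pairwise disjoint, each inducing a connected subgraph of radius at most $r$, and with an $H$-edge between $X_u$ and $X_v$ for every $uv\in E(G)$. For each $v$ I pick a center $c_v\in X_v$, so that every vertex of $X_v$ is within distance $r$ of $c_v$ inside $H[X_v]$. For an edge $uv\in E(G)$, concatenating a path of length $\le r$ from $c_u$ to its endpoint of the connecting edge, that edge, and a path of length $\le r$ back to $c_v$ shows $\operatorname{dist}_H(c_u,c_v)\le 2r+1$. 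Since the branch sets are disjoint, $v\mapsto c_v$ is injective, and it maps every edge of $G$ to a pair of vertices at $H$-distance at most $2r+1$; hence $G$ is isomorphic to a subgraph of the power $H^{2r+1}$. As deleting vertices and edges never increases the queue number, it suffices to prove $\qn(H^{2r+1})\le (2r+1)\,(2\qn(H))^{2r+1}$.

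Second, I would bound $\qn(H^{c})$ for $c=2r+1$. Fix a $q$-queue layout of $H$ with $q=\qn(H)$ and vertex order $\sigma$, and keep $\sigma$ as the order of $H^{c}$. Orient each edge $e$ of $H^{c}$ from its $\sigma$-smaller to its $\sigma$-larger endpoint and fix a shortest $H$-path $x_0,x_1,\dots,x_\ell$ realizing it, so $1\le \ell\le c$ and $x_0<_\sigma x_\ell$. I record the \emph{profile} of $e$ as the length $\ell$ together with the sequence $((Q_1,\delta_1),\dots,(Q_\ell,\delta_\ell))$, where $Q_i\in[q]$ is the queue of the step $x_{i-1}x_i$ in the layout of $H$ and $\delta_i\in\{+,-\}$ records whether $x_{i-1}<_\sigma x_i$ or $x_i<_\sigma x_{i-1}$. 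For a fixed $\ell$ there are at most $(2q)^\ell$ profiles, so the total number of profiles is at most $\sum_{\ell=1}^{c}(2q)^\ell\le c\,(2q)^{c}=(2r+1)(2q)^{2r+1}$. It then remains to show that the edges of $H^{c}$ sharing a profile form a single queue in $\sigma$, which yields the claimed bound.

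The heart of the argument is an order-propagation invariant. Let $e$ and $f$ have the same profile, with realizing paths $x_0,\dots,x_\ell$ and $y_0,\dots,y_\ell$. If they share an endpoint among $\{x_0,x_\ell\}$ and $\{y_0,y_\ell\}$ they cannot nest, since nesting requires four distinct vertices; so I assume these four are distinct and, after relabeling, that $x_0<_\sigma y_0$. I claim $x_i<_\sigma y_i$ for all $i$, by induction on $i$. In the inductive step, $x_{i-1}x_i$ and $y_{i-1}y_i$ lie in the same queue $Q_i$ and, by $\delta_i$, point the same way; since no two edges of a queue nest, the one forbidden containment between these two edges cannot occur, and in both the right-step and left-step cases this forces $x_i<_\sigma y_i$. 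Applying the invariant at $i=\ell$ gives $x_0<_\sigma y_0$ and $x_\ell<_\sigma y_\ell$, with $x_0<_\sigma x_\ell$ and $y_0<_\sigma y_\ell$ by the chosen orientation; hence neither of the intervals $[x_0,x_\ell]$, $[y_0,y_\ell]$ contains the other and $e,f$ do not nest.

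I expect the main obstacle to be exactly this order-propagation lemma: one must verify, carefully and for both step directions, that ruling out the single forbidden containment indeed survives to pin down the relative order $x_i<_\sigma y_i$, and one must separately dispose of the degenerate cases where $e$ and $f$ share a center endpoint. Everything else — the distance computation collapsing the shallow minor into $H^{2r+1}$, the monotonicity of $\qn$ under subgraphs, and the profile count — is routine bookkeeping.
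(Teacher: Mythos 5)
There is a genuine gap, and it lies exactly where you did \emph{not} expect it: the ``routine bookkeeping'' step that collapses the shallow minor into the power $H^{2r+1}$. The intermediate claim $\qn(H^{2r+1})\le (2r+1)(2\qn(H))^{2r+1}$ is false, and not fixably so: take $H=K_{1,n}$. Every linear layout of a star is a $1$-queue layout (all edges share the center, and edges with a common endpoint never nest), yet $(K_{1,n})^{2r+1}=K_{n+1}$ for every $r\ge 1$, and $\qn(K_{n+1})=\lfloor (n+1)/2\rfloor$ is unbounded. So no function of $r$ and $\qn(H)$ can bound $\qn(H^{2r+1})$. The failure point in your argument is the order-propagation invariant when the realizing paths share interior vertices: your induction tacitly assumes $x_i\ne y_i$, but if $x_i=y_i$ the two steps share an endpoint, the queue condition forbids nothing, and the propagation dies. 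Concretely, in the star with order $x_0<_\sigma y_0<_\sigma y_2<_\sigma x_2<_\sigma z$, putting $x_0z,\,y_0z$ in one queue and $zx_2,\,zy_2$ in another is a valid $2$-queue layout of $H$, and the $H^2$-edges $x_0x_2$ and $y_0y_2$ (both realized through $z$) have identical profiles --- same length, same queue sequence, same direction sequence --- yet nest. Note also that the shallow-minor hypothesis is genuinely stronger than the subgraph-of-a-power conclusion you reduced to: $K_{n+1}$ is a subgraph of $(K_{1,n})^2$ but is certainly not a shallow minor of a tree, which is why the lemma itself survives while your relaxation of it does not.

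The actual proof (the paper imports this as Lemma~12 of \cite{HW21} rather than reproving it) runs your profile/order-propagation engine \emph{inside} the minor structure instead of discarding it. One orders $V(G)$ by the positions of the centers $c_v$ in the given layout of $H$ (these are distinct since branch sets are disjoint), and assigns to each edge $uv\in E(G)$ the signature of a path of length at most $2r+1$ from $c_u$ to $c_v$ contained in $X_u\cup X_v$ plus the connecting edge --- exactly your $(\ell;(Q_1,\delta_1),\dots,(Q_\ell,\delta_\ell))$ data, with the same count $\sum_{\ell\le 2r+1}(2q)^\ell\le (2r+1)(2q)^{2r+1}$. The saving grace is disjointness: if two edges of $G$ share no endpoint, their four branch sets are pairwise disjoint, so the two realizing paths are vertex-disjoint and your induction is sound (all the vertices $x_i,y_i$ are distinct, so ruling out the one forbidden containment really does pin down $x_i<_\sigma y_i$ in both the $+$ and $-$ cases); if they do share an endpoint of $G$, they cannot nest regardless. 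So your core invariant is correct and is the heart of the real proof --- but it must be fed vertex-disjoint paths, and passing to $H^{2r+1}$ throws away precisely the disjointness it needs.
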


\section{Non-Separated Layouts}
\label{sec:non-separated-layouts}

In this section we explore \cref{open:rel2}.
Dujmovi{\'c} and Wood~\cite{DW05} studied graph subdivisions of stack and queue layouts.
They proved that every $s$-stack graph has
\begin{enumerate}[(i)]
	\item a $3$-stack subdivision with $2 \lceil \log_2 s \rceil - 2$ division vertices per edge, and
	\item a $1$-stack $1$-queue subdivision with $4 \lceil \log_2 s \rceil$ division vertices per edge.
\end{enumerate}
Similarly, every $q$-queue graph has
\begin{enumerate}[(i)]
	\item a $3$-stack subdivision with $2 \lceil \log_2 q \rceil + 1$ division vertices per edge, and
	\item a $2$-queue subdivision with $2 \lceil \log_2 q \rceil + 1$ division vertices per edge, and
	\item a $1$-stack $1$-queue subdivision with $4 \lceil \log_2 q \rceil + 2$ division vertices per edge.
\end{enumerate}

Notice some asymmetry in the statements of the two results:
It is not always possible to sub\-divide an $s$-stack graph with $f(s)$ division
vertices per edge (for an arbitrary function $f$) such that the result admits an $\Oh(1)$-queue layout.
Otherwise, such a subdivision combined with
\cref{lm:contraction} would imply that every $s$-stack graph admits an $\Oh(1)$-queue layout, contradicting the
main result of Dujmovi{\'c} et al.~\cite{DEHMW21}.

\newcommand{\Sh}{{\ensuremath{\mathcal{S}}}}
\newcommand{\Kh}{{\ensuremath{\mathcal{K}}}}
\newcommand{\Qh}{{\ensuremath{\mathcal{Q}}}}

\newcommand{\Vleaf}{{\ensuremath{\widetilde{V}}}}

The crux of the contributions of Dujmovi{\'c} and Wood~\cite{DW05} is a technical construction, which we formalize next.
Let $T$ be a rooted tree with nodes $V(T)$ and edges $E(T)$.
Given a graph $G=(V, E)$, a \df{$T$-partition} of $G$ is a pair $\big(T, \{T_x : x\in V(T)\}\big)$ consisting of a
tree $T$ and a partition of $V$ into sets $\{T_x : x\in V(T)\}$,
such that for every edge $(u, v) \in E$ one of the following holds:
(i)~$u, v \in T_x$ for some $x \in V(T)$, or (ii)~there is an edge $(x, y)$ of $T$ with $u \in T_x$ and $v \in T_y$.
The sets $T_x, x \in V(T)$ are called the \df{bags} of the $T$-partition.
A \df{$T$-layout} of a graph $G$ is a $T$-partition of $G$ in which the bags are ordered, that is,
$<_x$ is a total order of vertices in $T_x$; see~\cref{fig:subdiv_mixed_3_a} for an example. The vertex orders are described in terms of two functions, $\Sh: V(T) \rightarrow \mathbb{N}_0$ and $\Qh: V(T) \rightarrow \mathbb{N}_0$,
defined for the nodes of $T$.
We prescribe each bag to contain either only stacks or only queues formed by intra-bag edges. Here $\Sh(x)$ (respectively, $\Qh(x)$)
denotes the stack (queue) number of the intra-bag edges on $T_x$ under vertex order $<_x$ such that if $T_x$ is prescribed to contain stacks, i.e., if $\Sh(x) > 0$ then $\Qh(x) = 0$, and if $T_x$ is a bag containing queues, then $\Qh(x) > 0$ and $\Sh(x) = 0$.
Similarly, for the edges of $T$, $\Kh(x,y): E(T) \rightarrow \mathbb{N}_0$ is the minimum number of edge sets $E_1,\dots, E_{\Kh(x,y)}$ needed to partition the inter-bag edges of $G$ between $T_x$ and $T_y$ such that they are pairwise non-crossing. Under the concatenation of the
orders $<_x$ and $<_y$, each $E_i$, $1 \leq i \leq \Kh(x,y)$ will form a queue, while each $E_i$ forms a stack when concatenating $<_x$ and $>_y$.
The leaf nodes of a tree $T$ are denoted $\Vleaf(T)$.

In this paper, we work with \df{simple} $T$-layouts, where
\begin{itemize}
	\item for every non-leaf node $x \in V(T) \setminus \Vleaf(T)$, bag $T_x$ forms an independent set in $G$, that is,
	$\Sh(x) = \Qh(x) = 0$;
	\item for every leaf node $x \in \Vleaf(T)$,	bag $T_x$ admits a $1$-stack or a $1$-queue layout under $<_x$, that is,
	$\Sh(x) = 1, \Qh(x) = 0$ or $\Sh(x) = 0, \Qh(x) = 1$;
	\item for every edge $(x, y) \in E(T)$, it holds that $\Kh(x, y) = 1$

\end{itemize}

In all our constructions of tree-partitions,
we utilize a \df{binary tree}, that is, a rooted tree in which every node has at most two children.
The following result provides a subdivision and a tree-layout for a given graph.

\begin{lemma}[a special case of Lemma 21 in~\cite{DW05}]
	\label{lm:t_subdiv}
	Let $G$ be a graph that admits a
	$k$-stack (respectively, $k$-queue) layout with vertex order $\sigma$, and
	$T$ be a binary tree of height $h$ with $k$ or more leaves.
	Then $G$ has a subdivision, $D$, with an
	even number of division vertices per edge such that $D$ has a simple $T$-layout in which
	$\Sh(x) = 1$ (respectively, $\Qh(x) = 1$) for every leaf node $x \in \Vleaf(T)$.
	The number of division vertices per edge is at most $2h$, or exactly $2h$ if all the leaves of $T$ are at depth $h$.
	Moreover, the vertices of $G$ correspond to vertices in the root bag of 
    $T$ and their order in the $T$-layout is $\sigma$, whereas all other bags contain only division vertices.
\end{lemma}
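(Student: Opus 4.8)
The plan is to prove the stack case by induction on the height $h$ of $T$, building the subdivision and its $T$-layout from the root downward; the queue case then follows by the symmetry between nesting and crossing (reverse the intra-bag order at the leaves and the corresponding tie-breaks), using that the requirement $\Kh(x,y)=1$ is self-dual, since a bipartite edge set is a single queue under $<_x<_y$ iff it is a single stack under $<_x>_y$. Fix a $k$-stack layout of $G$ with order $\sigma$ and stacks $E_1,\dots,E_k$, and assign each stack to a distinct leaf of $T$. The root bag is set to $V(G)$, ordered by $\sigma$; every edge of $G$ will be subdivided into a path that descends from the root to the leaf of its stack and climbs back to the root, contributing two new division vertices at each level it traverses, for a total of $2d$ where $d\le h$ is the depth of that leaf. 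This already yields the claimed even count, at most $2h$, and exactly $2h$ when all leaves lie at depth $h$, while all bags except the root contain only division vertices.

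For the inductive step, let $r$ be the root with child subtrees $T_L,T_R$ (of height $<h$), split the stacks into those assigned to leaves of $T_L$ and of $T_R$, and write $E_L,E_R$ for the corresponding unions of edges. For each $e=(u,v)\in E_L$ with $u<_\sigma v$, I introduce two vertices $p^e,q^e$ into the bag $T_{c_L}$ of the left child $c_L$, add the inter-bag edges $(u,p^e)$ and $(q^e,v)$, and let $G_L$ be the graph on these vertices whose edges $\hat e=(p^e,q^e)$ inherit the stack partition of $E_L$. I then recurse on $(G_L,T_L)$, which subdivides $G_L$ and lays it out with $T_{c_L}$ as its root bag; placing $r$ on top produces the subdivision and $T$-layout for $E_L$, and symmetrically for $E_R$. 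The base case is a single leaf ($k=1$), where the $1$-stack sits directly in that bag with $\Sh=1$ and $0$ division vertices.

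The heart of the argument is choosing the order $<_{c_L}$ on $V(G_L)$ so that two conditions hold at once. First, for the tree edge $(r,c_L)$ I need $\Kh(r,c_L)=1$, i.e.\ the entire bundle of inter-bag edges (pooled over all stacks of $T_L$) forms a single non-nesting set under the concatenation of $\sigma$ and $<_{c_L}$. Since every vertex of $T_{c_L}$ has a \emph{unique} root-neighbour ($p^e\mapsto u$, $q^e\mapsto v$), ordering $V(G_L)$ primarily by the $\sigma$-position of that neighbour makes the bundle non-nesting; crucially this holds for \emph{every} tie-break, because two such edges could nest only if their two (distinct) root-endpoints were ordered oppositely to their $T_{c_L}$-endpoints, which aligning the two orders forbids. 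Second, to keep the induction valid I need each $\hat E_j$ to remain a stack (non-crossing) under $<_{c_L}$, so that $G_L$ is genuinely a $k_L$-stack graph and the recursion terminates with a single $1$-stack at each leaf. For this the secondary (tie-break) order within each block of vertices sharing a common root-neighbour must be taken consistently with the laminar nesting structure of each stack, so that blowing up each vertex of $G$ into a block keeps the non-crossing arc diagram non-crossing.

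The main obstacle, and the step requiring the most care, is precisely this tie-breaking. I must verify that the laminar-order rule preserves non-crossing of each stack under the vertex blow-up, and that the tie-breaks demanded by different stacks never conflict. The latter is what makes a single linear order per bag suffice: each division vertex lies on exactly one edge and hence in exactly one stack, so distinct stacks constrain disjoint subsets of a block and may be merged arbitrarily — crossings between different stacks are permitted and harmless, as only same-stack crossings violate a stack. Note also that the single order $<_{c_L}$ simultaneously serves the parent edge $(r,c_L)$ and, as the input ``$\sigma$'' of the recursion, the child edges of $c_L$, so no conflict arises between levels. Once this is in place, the three conditions of a simple $T$-layout follow routinely: non-root bags are independent since their division vertices carry only inter-bag edges, the root bag equals $V(G)$ ordered by $\sigma$ by construction, each leaf realises its stack as a single $1$-stack with $\Qh=0,\Sh=1$, and every tree edge satisfies $\Kh=1$ by the first condition above; the division-vertex count $2d$ accumulates as $2(d-1)+2$ through the recursion.
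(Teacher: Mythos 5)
The paper does not actually prove this lemma: it imports it verbatim as a special case of Lemma~21 of Dujmovi\'{c} and Wood~\cite{DW05}, so there is no in-paper argument to compare against. Your reconstruction is essentially the construction underlying that lemma, and its main ingredients are sound: routing each stack to its own leaf and subdividing each edge into a path that descends to that leaf and returns (giving $2d$ division vertices for a leaf at depth $d$, hence the even count, the bound $2h$, and exactly $2h$ when all leaves are at depth $h$); ordering each child bag primarily by the parent-bag position of the unique parent neighbour, which forces $\mathcal{K}(x,y)=1$ on every tree edge independently of tie-breaks (inter-bag edges with distinct parent endpoints are consistently ordered, and those sharing a parent endpoint cannot nest); and the observation that division vertices of distinct stacks occupy disjoint parts of each block, so per-stack tie-breaks never conflict and all non-root, non-leaf bags are independent sets.

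Two points need more care. First, the laminar tie-break that keeps each individual stack non-crossing under the blow-up is the technical heart and you only assert it: within the block of $u$ one must place the copies of edges \emph{ending} at $u$ before the copies of edges \emph{starting} at $u$, and order each half by the far endpoint (decreasing for the outgoing half, so that arcs sharing their left end stay nested rather than cross; symmetrically for the incoming half). Second, your dispatch of the queue case is misstated: reversing the \emph{entire} intra-bag order at a leaf is an order-reversing bijection of that bag and preserves both crossing and nesting, so it does not convert a $1$-stack into a $1$-queue; moreover the queue statement starts from a different hypothesis (a $k$-queue layout of $G$) and cannot be obtained from the stack statement by post-hoc reversals. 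The correct route, which your remark on the self-duality of $\mathcal{K}$ already supports, is to rerun the identical recursion with ``non-nesting'' as the inductive invariant and the opposite within-block tie-break (far endpoint increasing); this is a parallel argument, not a reduction. With these two repairs your proof is complete and matches the cited construction.
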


In what follows we consider a (not necessarily proper) $2$-coloring of edges of a tree
using colors blue and red, i.e., we allow edges with the same color at a common endvertex. Throughout, we associate blue colors with stacks, and
red colors with queues. The set of blue edges is $E^s(T) \subseteq E(T)$ and the set of red edges is
$E^q(T)\subseteq E(T)$.

\begin{lemma}[a special case of Lemma 22 in~\cite{DW05}]
	\label{lm:t_layout}
	Let $G$ be a graph with a $T$-layout for some tree $T$.
	Suppose that edges of $T$ are 2-colored in red and blue, and its nodes, $V(T)$, are ordered by $\sigma$ such that the blue edges, $E^s$,
	form a stack and the red edges, $E^q$, form a queue. Let
\begin{align*}
	\lambda_s &= \max_{x \in V(T)} \Big( \Sh(x) + \sum_{(y, x) \in E^s \colon y <_{\sigma} x } \Kh(y,x) + \sum_{(x, y) \in E^s \colon x <_{\sigma} y} \Kh(x, y) \Big),\text{ and} \\
	\lambda_q &= \max_{x \in V(T)} \Big( \Qh(x) + \max_{y \in V(T) \colon y \le_{\sigma} x}  \sum_{(y,z) \in E^q \colon x \le_{\sigma} z} \Kh(y, z)  \Big).
\end{align*}

	Then $G$ admits a mixed $\lambda_s$-stack $\lambda_q$-queue layout.
	Furthermore, the order of vertices in the root bag of the $T$-layout is preserved in the mixed layout.
\end{lemma}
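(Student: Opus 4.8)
The plan is to build the desired mixed layout by placing the bags consecutively in the order $\sigma$, keeping each bag $T_x$ internally ordered by $<_x$; call the resulting vertex order $\pi$. Since every bag (in particular the root bag) keeps its internal order, the ``furthermore'' claim about the root bag is immediate. I would then route the edges by type: each intra-bag edge of $T_x$ goes to the $\Sh(x)$ stacks or $\Qh(x)$ queues guaranteed by the $T$-layout, each inter-bag edge along a blue tree edge goes to a stack, and each inter-bag edge along a red tree edge goes to a queue. It then remains to bound the number of stacks by $\lambda_s$ and the number of queues by $\lambda_q$.

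For the stacks, the key observation is that the blue tree edges, forming a stack in $\sigma$, have a laminar (nested-or-disjoint) interval structure along $\sigma$. First I would show that two stack edges of $G$ can cross in $\pi$ only if they have endpoints in a common bag $T_x$: if the two corresponding bag-pairs are nested or disjoint in $\sigma$, then the edges themselves nest or are disjoint in $\pi$ and never cross. Consequently, the crossing conflicts among stack edges are localized at individual bags. At a fixed bag $T_x$, the stack edges incident to $T_x$ consist of the $\Sh(x)$ intra-bag stacks together with, for every incident blue tree edge $(y,x)$ or $(x,y)$, a bundle forming $\Kh(y,x)$ (resp.\ $\Kh(x,y)$) stacks under concatenating $<_x$ with $>_y$; their total is exactly the summand defining $\lambda_s$ at $x$. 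I would then assign the stack edges to $\{1,\dots,\lambda_s\}$ consistently across bags, processing the laminar family of blue bundles, so that conflicting (hence co-bag) edges receive distinct labels.

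For the queues I would use the well-known fact that, for a fixed vertex order, the minimum number of queues equals the maximum size of a \emph{rainbow} (a set of pairwise nesting edges). Thus it suffices to bound every rainbow $R$ of queue edges by $\lambda_q$. All edges of $R$ contain a common point, which lies in some bag $T_x$; hence every intra-bag edge of $R$ lies in $T_x$ (at most $\Qh(x)$ of them are pairwise nesting), while every inter-bag edge of $R$ runs from a bag $y \le_\sigma x$ to a bag $z$ with $x \le_\sigma z$. Using that the red tree edges form a queue in $\sigma$ (are pairwise non-nesting), I would argue that the inter-bag edges of $R$ are anchored at a single source bag $y$, so that $R$ is covered by the intra-bag queues at $x$ together with the bundles of the red edges $(y,z)$ with $x \le_\sigma z$ -- each bundle contributing a rainbow of size at most $\Kh(y,z)$. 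Summing yields precisely the summand defining $\lambda_q$ at $x$, and maximizing over $x$ gives $\lambda_q$.

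The hard part will be the queue bound, and specifically the single-source anchoring that produces the asymmetric inner maximum over $y$ in $\lambda_q$: I expect this to require a careful interplay between the non-nesting structure of the red tree edges in $\sigma$, the within-bundle nesting counted by $\Kh$, and the internal orders $<_z$ near the common point, so that rainbows spanning several source bags are ruled out. By contrast, the stack bound is the cleaner, essentially dual, direction; there the remaining technical point is to confirm that the per-bag thread counts can be realized by a single globally consistent labeling along the laminar family of blue bundles.
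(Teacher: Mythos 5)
The paper itself gives no proof of this lemma (it is quoted as a special case of Lemma~22 of Dujmovi\'c and Wood), so your proposal can only be measured against the intended construction. Your overall architecture is the right one --- concatenate the bags in the order $\sigma$, send intra-bag edges and blue/red bundles to stacks/queues, bound stack conflicts per bag and queue conflicts via rainbows --- but there are two genuine gaps. First, your global vertex order keeps every bag ordered by $<_x$, yet by the paper's definition of $\mathcal{K}(x,y)$ the inter-bag bundle of a blue tree edge forms $\mathcal{K}(x,y)$ \emph{stacks} only under the concatenation of $<_x$ with the \emph{reversed} order $>_y$; under $<_x$ and $<_y$ it forms queues. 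You even quote the ``$<_x$ with $>_y$'' condition later, but never build it into $\pi$. The construction must orient each bag by $<_x$ or its reverse according to the parity of blue edges on the root-to-$x$ path in $T$ (consistent because $T$ is a tree; reversal preserves intra-bag stacks and queues, red bundles keep matching orientations, and the root bag is unflipped, which is what saves the ``furthermore'' claim). Without this, your stack bound does not get off the ground.

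Second, and more seriously, the step you yourself flag as the hard part --- that the inter-bag edges of a rainbow are ``anchored at a single source bag $y$'' --- is not true in the generality of the stated lemma. The non-nesting of the red tree edges only kills pairs of bundles $(y_1,z_1)$, $(y_2,z_2)$ whose intervals \emph{strictly} nest in $\sigma$; two bundles with distinct sources $y_1<_\sigma y_2$ but a \emph{common target} $z$ share an endpoint, hence do not nest as tree edges, yet their graph edges can perfectly well nest in $\pi$ (take the edge from $T_{y_1}$ ending later in $T_z$ than the edge from $T_{y_2}$ begins\ldots ends). Such a pair is a rainbow of size $2$ that no single term $\max_{y}\sum_{(y,z)}\mathcal{K}(y,z)$ of $\lambda_q$ accounts for. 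So your plan cannot be completed as stated; one needs the additional structural property that every node of $T$ has at most one $\sigma$-earlier neighbour via a red edge (equivalently, each target has a unique source). This holds in every application in the paper, where $\sigma$ is a BFS/DFS order of $T$ with parents preceding their children, and is the implicit content of the ``special case''; your proof must isolate and use this hypothesis rather than hope the asymmetric inner maximum falls out of the non-nesting of red edges alone. The stack half, by contrast, is fine once the reversals are in place: crossings of stack edges are confined to pairs sharing a bag, and the per-bag demands $\mathcal{S}(x)+\sum\mathcal{K}$ can be realized by a greedy weighted edge-colouring of the blue forest.
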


Now we can state the new results of the section.

\begin{lemma}
	\label{lm:subdiv_mixed_3}
	Let $G$ be an $s$-stack $q$-queue graph. Then $G$ has a
        $3$-stack subdivision with \emph{at most} $2 \lceil \log_2 (\max(s,q)) \rceil + 3$ division vertices per edge.
\end{lemma}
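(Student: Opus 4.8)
The plan is to split the given $s$-stack $q$-queue layout (with vertex order $\sigma$) into its stack part and its queue part, build a subdivision and a tree-layout for each part separately via \cref{lm:t_subdiv}, glue the two tree-layouts at a common root bag holding the original vertices in order $\sigma$, and finally invoke \cref{lm:t_layout} to read off a pure $3$-stack layout. Write $k = \max(s,q)$ and $h = \lceil \log_2 k \rceil$, and let $G_s$ (resp.\ $G_q$) be the subgraph of $G$ formed by the edges in the $s$ stacks (resp.\ the $q$ queues). Both inherit the vertex order $\sigma$, so $G_s$ is an $s$-stack graph and $G_q$ is a $q$-queue graph; splitting is necessary because \cref{lm:t_subdiv} applies to a \emph{pure} stack or queue layout only.

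First I would apply \cref{lm:t_subdiv} to $G_s$, using a binary tree whose root has a single child that roots a complete binary tree of height $h$ (so the tree has height $h+1$ and at least $2^h \ge k \ge s$ leaves). This yields a subdivision of $G_s$ with a simple $T$-layout whose leaves are all stacks, whose root bag is exactly $V$ in order $\sigma$, and with at most $2(h+1) = 2h+2$ division vertices per edge. Symmetrically I would apply \cref{lm:t_subdiv} to $G_q$ with an analogous height-$(h+1)$ tree to obtain a subdivision with queue leaves, root bag $V$ in order $\sigma$, and at most $2h+2$ division vertices per edge. Since both tree-layouts preserve the same order $\sigma$ on the shared set $V$, I can identify their two roots into a single root $\rho$ with bag $V$; the single-child-root trick ensures that the resulting tree $T$ is binary with $\rho$ having exactly two children, one on the ``stack side'' and one on the ``queue side''. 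The two subdivisions then combine into a single subdivision $D$ of $G$ in which stack edges are routed on the stack side and queue edges on the queue side.

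The technical heart is converting the queue leaves into stack leaves, which is exactly what keeps the final layout pure. At each queue leaf $x$ the intra-bag edges form a single queue under $<_x$; I would subdivide each of them once, collect the new division vertices in a fresh child bag $x'$ of $x$, and order them to match the endpoints in $x$. Because the intra-bag edges are non-nesting, this turns the two halves of every edge into non-crossing inter-bag edges, so the edges between $x$ and $x'$ need only a constant number of stacks, namely $\Kh(x,x') \le 2$, while $x$ becomes an independent internal node and $x'$ a genuine stack leaf ($\Sh(x')=1$). This costs one extra division vertex per queue edge, for a total of at most $2h+3$ on the queue side, whereas the stack side stays at $2h+2$; hence every edge of $D$ is subdivided at most $2\lceil \log_2(\max(s,q)) \rceil + 3$ times.

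Finally I would colour every edge of $T$ blue and order the nodes $V(T)$ by a one-page (outerplanar) ordering of the tree, so that all tree edges form a stack and $E^q=\emptyset$. Applying \cref{lm:t_layout} then gives $\lambda_q = 0$, since all $\Qh(x)=0$ and there are no red edges, and a routine case check over the node types (root of degree two, internal node of degree three, degree-one stack leaf, and the converted node with its child edge of $\Kh \le 2$) shows $\lambda_s \le 3$. Thus $D$ admits a $3$-stack layout, as required. I expect the main obstacles to be the bookkeeping that keeps the merged tree binary so that $\lambda_s$ does not exceed $3$ at the glued root, and verifying that the queue-to-stack conversion indeed contributes only $\Kh \le 2$ (and a single extra division vertex) without disturbing the stack half-edges already routed up the tree.
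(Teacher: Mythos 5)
Your proposal follows the paper's proof essentially step for step: split into the stack and queue subgraphs, apply \cref{lm:t_subdiv} to each with a height-$(h+1)$ tree whose root has a single child, convert each queue leaf into a stack-compatible leaf by subdividing its intra-bag edges once into a new child bag with $\Kh\le 2$ (costing the extra $+1$ division vertex on queue edges), merge at the common root bag, colour all tree edges blue with a DFS/one-page order, and verify $\lambda_q=0$, $\lambda_s\le 3$ via \cref{lm:t_layout}. The only nitpick is that the new child bag $x'$ contains only division vertices, which form an independent set, so $\Sh(x')=0$ rather than $1$; this only loosens your estimate and does not affect the bound.
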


\begin{proof}
	First we show how to obtain a tree-layout for an $s$-stack $q$-queue graph $G=(V, E)$ with the vertex order $\sigma$.
	Denote $h = \lceil \log_2(\max(s,q)) \rceil$ so that $\max(s, q) \le 2^h$.
	Assume that $E = S_1 \cup \dots \cup S_s \cup Q_1 \cup \dots \cup Q_q$,
	where $S_i, 1 \le i \le s$ are stacks and $Q_i, 1 \le i \le q$~are~queues.

	Consider the subgraph of $G$ induced by all stack edges, $G^s = (V, S_1 \cup \dots \cup S_s)$.
	Let $T_s$ be a binary tree of height $h + 1$ in which the root node has a single child,
	each internal non-root node has exactly two children, and all leaves are at the same depth.
	By \cref{lm:t_subdiv}, there exists a subdivision
	of $G^s$, denoted $D^s$, with exactly $2(h+1)$ division vertices per edge and a simple $T_s$-layout of $D^s$.
	It holds that $\Sh(x) = 1, \Qh(x) = 0$ for $x \in \Vleaf(T_s)$,
	$\Sh(x) = \Qh(x) = 0$ for $x \in V(T_s) \setminus \Vleaf(T_s)$,
	and $\Kh(x,y) = 1$ for $(x,y) \in E(T_s)$; see \cref{fig:subdiv_mixed_3}.

    \begin{figure}[!tb]
	\begin{subfigure}[b]{.5\linewidth}
		\center
		\includegraphics[page=3,height=5cm]{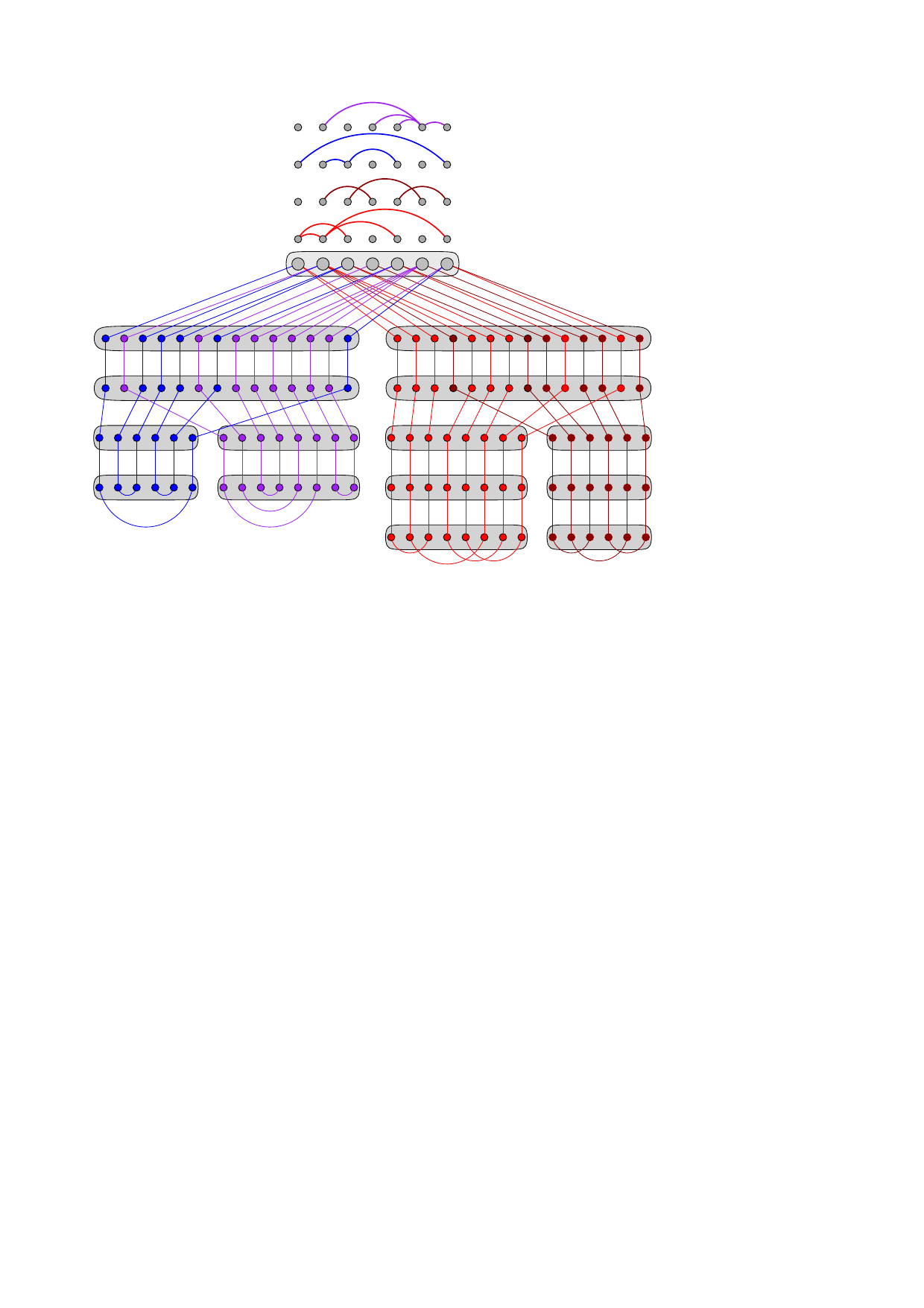}
		\subcaption{A tree-layout of the subdivision of a $2$-stack $2$-queue graph. Edge colors in the tree-layout correspond to the original queues and stacks of $G$ shown above the root node.}
        \label{fig:subdiv_mixed_3_a}
	\end{subfigure}
	\hfill
	\begin{subfigure}[b]{.42\linewidth}
		\center
		\includegraphics[page=4,height=5cm]{figures/subdivisions_new}
		\subcaption{\nolinenumbers{}A tree-partition and a $1$-stack layout of the corresponding binary tree, $T_{sq}$, where the blue edge color corresponds to~\cref{lm:t_layout}.}
		\label{fig:subdiv_mixed_3_b}
	\end{subfigure}
	\caption{An illustration for \cref{lm:subdiv_mixed_3}:
		Subdividing a $2$-stack $2$-queue graph $G$ with at most $2 \lceil \log_2(\max(s,q)) \rceil + 3=5$ division vertices per edge
		to obtain a $3$-stack graph.}
	\label{fig:subdiv_mixed_3}
    \end{figure}

	Next consider the subgraph of $G$ induced by the queue edges, $G^q = (V, Q_1 \cup \dots \cup Q_q)$.
	Start with a binary tree of height $h+1$, denoted $T'_q$, which is a copy of $T_s$ (that is,
	the root node has one child and each internal non-root node has two children). There exists a subdivision
	of $G^q$ with exactly $2h+2$ subdivision vertices and its simple $T'_q$-layout by \cref{lm:t_subdiv}. We modify the subdivision and the
	tree-layout as follows. Consider a leaf node $x' \in \Vleaf(T'_q)$ and let $G_{x'}$ be the graph induced by
	vertices in $T_{x'}$ and the corresponding intra-bag edges. Observe that $\Qh(x') = 1$ in the $T'_q$-layout, and hence,
	$\qn(G_{x'}) = 1$ with $<_{x'}$ as the vertex order.
	Now, we apply \cref{lm:sub} by subdividing every edge of $G_{x'}$ once and let $G_x$ be the resulting graph. This results in a $2$-queue layout of the subdivision $G_x$ in which the vertices of $G_{x'}$ are separated from
	the new division vertices and are ordered as in the $T'_q$-layout. Thus, we can build a new tree, denoted
	$T_q$, by appending a child, $x$, to every node $x' \in \Vleaf(T'_q)$. This results in a (non-simple)
	$T_q$-layout for $G_q$, where the leaf nodes of $T_q$ contain the division vertices in the order given by \cref{lm:sub}.
	Since the queue number of $G_{x}$ is at most $2$, the inter-bag edges between $G_{x'}$ and $G_{x}$ can be partitioned into two sets of pairwise non-crossing edges, i.e., we have that $\Kh(x', x) = 2$ for all $(x', x) \in E(T_q)$, where $x \in \Vleaf(T_q)$; see~\cref{fig:subdiv_mixed_3_a} for an illustration.

	By \cref{lm:t_subdiv}, the root bags of the $T_s$-layout and the $T_q$-layout contain the same vertices, $V$, both ordered by $\sigma$.
	Thus, we can merge the two tree-layouts into a joint one, called $T_{sq}$-layout; see \cref{fig:subdiv_mixed_3_b}.
	The corresponding subdivision of $G$, denoted $D^{sq}$, has stack edges subdivided $2h+2$ times and
	queue edges subdivided $2h+3$ times. To apply \cref{lm:t_layout} for the $T_{sq}$-layout, we color all the edges
	of $T_{sq}$ blue and find a $1$-stack layout of the tree via a depth-first search traversal
	such that every node precedes its children in the order.
	Let us argue that the result of applying \cref{lm:t_layout} is a $3$-stack layout, that is,
	$\lambda_s = 3$ and $\lambda_q = 0$. 

	Note that $\Qh(x) = 0$ for all nodes of $T_{sq}$ and the tree contains no red edges; thus, $\lambda_q = 0$.
	For the stack number, consider four disjoint groups of nodes of $T_{sq}$:

	\begin{itemize}
		\item for $x \in \Vleaf(T_s)$, it holds that $\Sh(x) = 1$ and $\Kh(y,x)=1$ for $(y,x) \in E^s$;

		\item for $x \in V(T_s) \setminus \Vleaf(T_s)$ and $x \in V(T'_q) \setminus \Vleaf(T'_q)$,
		it holds that $\Sh(x) = 0$, there exists one edge $(y,x) \in E^s$ with $\Kh(y,x)=1$ and
		two edges $(x,y) \in E^s$ with $\Kh(x,y)=1$;

		\item for $x \in \Vleaf(T'_q)$, it holds that
		$\Sh(x) = 0$, $\Kh(y,x)=1$ for a single edge $(y,x) \in E^s$, and $\Kh(x,y)=2$ for a single edge $(x,y) \in E^s$;

		\item for $x \in \Vleaf(T_q)$, it holds that
		$\Sh(x) = 0$, and $\Kh(y,x)=2$ for a single edge $(y,x) \in E^s$.
	\end{itemize}

	Combining everything together, we get
	\begin{equation*}
		\begin{aligned}
			\lambda_s & = \max_{x \in V(T_{sq})} \Big( \Sh(x) + \sum_{(y, x) \in E^s} \Kh(y,x) + \sum_{(x, y) \in E^s} \Kh(x, y) \Big)
			& \le 3.
		\end{aligned}
	\end{equation*}

	Therefore, subdivision $D^{sq}$ of $G$ admits a $3$-stack layout by \cref{lm:t_layout}.
\end{proof}

With a similar technique we can find a $1$-stack $1$-queue subdivision (\cref{lm:subdiv_mixed_11}) instead of a $3$-stack subdivision (\cref{lm:subdiv_mixed_3}).

\begin{lemma}
	\label{lm:subdiv_mixed_11}
	Let $G$ be an $s$-stack $q$-queue graph. Then $G$ has a
        $1$-stack $1$-queue subdivision with \emph{at most} $4 \lceil \log_2(\max(s,q)) \rceil + 6$ division vertices per edge.
\end{lemma}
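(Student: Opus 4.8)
The plan is to mirror the proof of \cref{lm:subdiv_mixed_3} almost verbatim, changing only how the merged tree-layout is finally realized as a mixed layout. First I would decompose $E = S_1 \cup \dots \cup S_s \cup Q_1 \cup \dots \cup Q_q$ and set $h = \lceil \log_2(\max(s,q)) \rceil$. Exactly as before, I would invoke \cref{lm:t_subdiv} to obtain a simple tree-layout of a subdivision of the stack subgraph $G^s$ (whose leaves carry $\Sh(x)=1$) and of the queue subgraph $G^q$ (whose leaves carry $\Qh(x)=1$), use \cref{lm:sub} to subdivide the intra-bag queues once and append a child to every queue-leaf so that the division vertices are separated, and merge the two into a single $T_{sq}$-layout whose root bag holds $V$ in the order $\sigma$.

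The one substantive change is the realization via \cref{lm:t_layout}. In \cref{lm:subdiv_mixed_3} all tree edges are colored blue and the nodes are ordered by a depth-first preorder, which yields $\lambda_s = 3$ because an internal node is incident to up to three blue edges. To obtain a $1$-stack $1$-queue layout I instead need a genuine red/blue $2$-coloring together with a node order $\sigma$ under which the blue edges form a stack and the red edges form a queue, and for which $\lambda_s = 1$ and $\lambda_q = 1$. The natural target is to make the blue edges a non-crossing matching, so that every node is incident to at most one blue edge and $\lambda_s = 1$, and to make the red edges a linear forest that, under an in-order--type node ordering, has at most one edge covering each cut, so that $\lambda_q = 1$ (which in particular forces the red edges to be non-nesting). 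Here the stack-leaves ($\Sh = 1$) must be incident only to red edges, while the queue-leaves ($\Qh = 1$) must be incident only to blue edges, reflecting the usual stack--queue duality.

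To make such a coloring possible I would replace the height-$(h+1)$ binary trees of \cref{lm:subdiv_mixed_3} by trees of double height $2(h+1)$, for instance by subdividing every tree edge once before applying \cref{lm:t_subdiv}. The point of this doubling is exactly to resolve the degree-$3$ branching nodes: in the original tree a branching node is forced to carry two edges of the same color and hence contributes at least $2$ to $\lambda_s$ or creates coverage $2$ in the red queue, whereas after subdividing, each branching can be split so that its two child-edges leave from different nodes and, under an in-order layout, head to opposite sides of the parent, where they no longer overlap. Feeding \cref{lm:t_subdiv} such double-height trees produces $2\cdot 2(h+1) = 4h+4$ division vertices per stack edge; the extra \cref{lm:sub} subdivision and the appended leaves used for the intra-bag queues add $2$ more on the queue edges, for a maximum of $4h + 6 = 4\lceil \log_2(\max(s,q))\rceil + 6$, as claimed.

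The main obstacle is verifying that a single node order $\sigma$ simultaneously keeps the blue matching non-crossing with every node's blue contribution at $1$, and keeps the red linear forest non-nesting with at most one edge covering each cut, all while respecting the constraint of \cref{lm:t_layout} that the root bag stays ordered by $\sigma$. I expect the delicate point to be the global check that $\lambda_q = 1$: guaranteeing this within each short red path coming from one branching node is easy, but one must argue that across all branches and across the merged stack- and queue-trees no two red edges ever nest or jointly cover a cut. In particular the intra-bag queues, which in \cref{lm:subdiv_mixed_3} were handled by a single appended edge with $\Kh = 2$, cannot simply be colored red (that would give coverage $2$); they must be split into two $\Kh=1$ contributions routed separately, with the additional subdivision absorbed into the constant. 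Once this bookkeeping is done and the stack-leaves and queue-leaves are shown to receive the correct incident colors, \cref{lm:t_layout} yields the desired $1$-stack $1$-queue layout of the subdivision $D^{sq}$.
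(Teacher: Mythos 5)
Your high-level plan matches the paper's in its key structural move -- subdividing every tree edge once so the trees have height $2(h+1)$, which lets the blue edges be a matching avoiding the stack-leaves (where $\Sh=1$) while the red edges avoid the queue-leaves (where $\Qh=1$), and then invoking \cref{lm:t_layout} with $\lambda_s=\lambda_q=1$. However, two steps that you flag as ``delicate'' are genuinely unresolved and, as proposed, would fail. The first is the queue subgraph. You import from \cref{lm:subdiv_mixed_3} the step that applies \cref{lm:sub} to the intra-bag queues and appends a child bag with $\Kh(x',x)=2$; you correctly note that a $\Kh=2$ edge is fatal for $\lambda_q=1$ and say it ``must be split into two $\Kh=1$ contributions routed separately,'' but you do not say how, and it is unclear how to do so while keeping the blue edges a matching and each node with at most one outgoing red edge. (Your division-vertex count also does not quite follow from this route: one extra \cref{lm:sub} subdivision gives $4h+5$, not $4h+6$.) The paper sidesteps the issue entirely: it never uses \cref{lm:sub} here. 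It builds $T_q$ by copying the already-subdivided tree $T_s$ and appending one node to each leaf, then applies \cref{lm:t_subdiv} directly to $G^q$ with this tree of height $2(h+1)+1$; by the definition of a simple $T$-layout this yields $\Qh(x)=1$ at the leaves, $\Kh=1$ on \emph{every} tree edge, and exactly $4h+6$ division vertices per queue edge, so no $\Kh=2$ edge ever arises.

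The second gap is the node order. You propose an ``in-order--type'' ordering in which the two child-edges of a branching node head to opposite sides of the parent. Under any such DFS-based order, a red edge from a node to a child $c$ spans an interval containing part of $c$'s subtree, so a red edge deeper inside that subtree nests within it; the red edges then do not form a queue, and \cref{lm:t_layout} is not applicable. The paper instead orders the nodes of $T_{sq}$ level by level (the standard $1$-queue layout of a tree), placing within each level first the children reached by blue edges in reversed parent order and then the children reached by red edges in parent order. This makes the blue matching non-crossing and the red path-forest non-nesting, and since the coloring gives every node at most one outgoing red edge, the quantity $\max_{y \le_\sigma x}\sum_{(y,z)\in E^q\colon x\le_\sigma z}\Kh(y,z)$ is at most $1$, yielding $\lambda_q\le 1$. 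Without replacing your ordering by something of this kind, the final step of your argument does not go through.
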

\begin{proof}
	First we show how to obtain a tree-layout for a given $s$-stack $q$-queue graph $G=(V, E)$;
	assume that $\sigma$ is the vertex order of $G$.
	Denote $h = \lceil \log_2(\max(s,q)) \rceil$ so that $\max(s, q) \le 2^h$.
	Assume that the edges of $G$ consist of $E = E^s_1 \cup \dots \cup E^s_s \cup E^q_1 \cup \dots \cup E^q_q$,
	where $E^s_i$ is the $i$-th stack and $E^q_i$ is the $i$-th queue; see \cref{fig:subdiv_mixed22ii}.

    Consider a subgraph of $G$ induced by the stack edges, $G^s = (V, E^s_1 \cup \dots \cup E^s_s)$.
	Start with a binary tree, $T'_s$, of height $h + 1$ in which the root node has a single child,
	each internal non-root node has exactly two children, and all leaves are at the same level.
	Build a binary tree $T_s$ from $T'_s$ by subdividing
	every edge once; the height of $T_s$ is $2(h+1)$. By \cref{lm:t_subdiv}, there exists a subdivision of $G^s$,
	denoted $D^s$, with exactly $4(h+1)$ division vertices per edge and a simple $T_s$-layout of $D^s$.
	For the tree-layout, it holds that $\Sh(x) = 1, \Qh(x) = 0$ for the leaf nodes $x \in \Vleaf(T_s)$, and
	$\Kh(x,y) = 1, (x,y) \in E(T_s)$. Furthermore, the root bag of $T_s$-layout consists of vertices $V$, which are ordered by $\sigma$.

	Next consider the subgraph of $G$ induced by the queue edges, $G^q = (V, E^q_1 \cup \dots \cup E^q_q)$.
	To construct a tree-layout of $G^q$, we build a binary tree $T_q$ by copying $T_s$ and appending a node to
	every leaf; the resulting tree has height $2(h+1)+1$.
	By \cref{lm:t_subdiv}, there exists a subdivision
	of $G^q$, denoted $D^q$, with exactly $4(h+1)+2$ division vertices per edge and a simple $T_q$-layout of $D^q$.
	For the layout, we have $\Sh(x) = 0, \Qh(x) = 1$ for the leaf nodes $x \in \Vleaf(T_q)$,
	and $\Kh(x,y) = 1$ for $(x,y) \in E(T_q)$. Again, the root bag of $T_q$-layout contains vertices $V$ ordered by $\sigma$.

	Since the root bags of $T_s$-layout and $T_q$-layout share the same vertex set ordered by $\sigma$,
	we can merge them into a simple $T_{sq}$-layout by identifying the root bags.
	In the corresponding subdivision of $G$, denoted $D^{sq}$,
	the stack edges are subdivided $4h+4$ times, while the queue edges are subdivided $4h+6$ times.
	Now, we color the edges of $T_{sq}$ as follows. For every node with two children, color one of the outgoing edges blue and the other red.
	For every node, $x \in V(T_{sq})$, with a single child, color the edge red if $x$ is at depth $\le 2h$ and color it blue
	if $x$ is at depth $2h+1$;	refer to \cref{fig:subdiv_mixed22ii_b}.
	Observe that the blue edges form a matching, while the red edges are a collection of paths in the tree.

	In order to build a linear layout of $D^{sq}$ using \cref{lm:t_layout}, we order the nodes of $T_{sq}$ so that the blue
	edges form a stack and the red edges form a queue. To this end, order the nodes of $T_{sq}$ level-by-level
	starting from the root. Denote the set of nodes at level $i$ by $L_i$ and suppose that we constructed an
	order $<_{i}$ for $L_i$; the task is to build the order $<_{i+1}$ for $L_{i+1}$. Every node in $L_{i+1}$ has
	a unique parent in $L_i$ adjacent via a blue or a red edge. Sort the nodes in $L_{i+1}$ such that
	we first have all nodes with a blue parent edge ordered by $>_{i}$, and then we have all nodes with a red parent edge ordered by $<_{i}$.
	It is easy to see that the blue edges (forming a matching) do not cross in the layout, while the red edges do not nest.
	Let us denote the final resulting node order of $V(T_{sq})$ by $\sigma$.

	\begin{figure}[!tb]
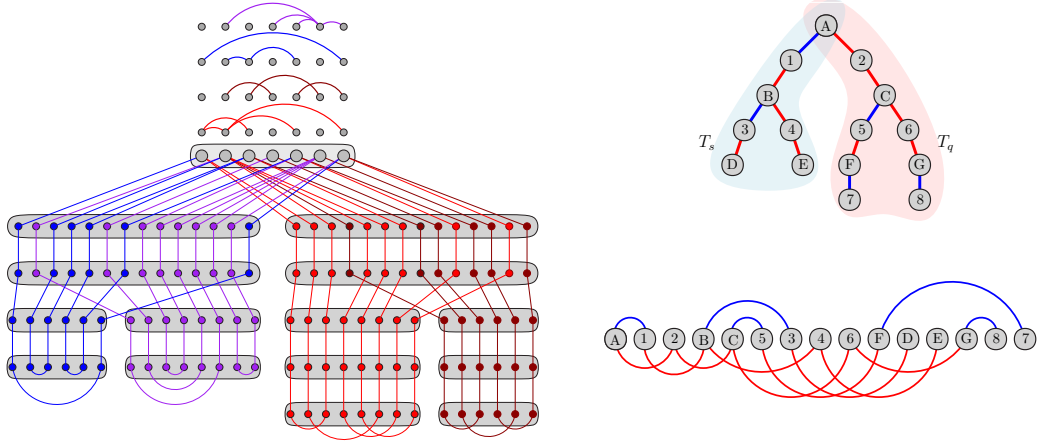

		\begin{subfigure}[b]{.56\linewidth}
			\center
			\includegraphics[page=1,height=6cm]{figures/subdivisions_new}
			\subcaption{A tree-layout of the subdivision of a $1$-stack $1$-queue graph}
		\end{subfigure}
		\hfill
		\begin{subfigure}[b]{.42\linewidth}
			\center
			\includegraphics[page=2,height=6cm]{figures/subdivisions_new}
			\subcaption{\nolinenumbers{}A tree-partition and a $1$-stack $1$-queue layout of the corresponding binary tree, $T_{sq}$}
			\label{fig:subdiv_mixed22ii_b}
		\end{subfigure}
		\caption{An illustration for \cref{lm:subdiv_mixed_11}:
			Subdividing a $2$-stack $2$-queue graph
			with at most $4 \lceil \log_2 \max(s, q) \rceil + 6=10$ division vertices per edge
			to obtain a mixed $1$-stack $1$-queue graph.}
		\label{fig:subdiv_mixed22ii}
	\end{figure}

	Now we apply \cref{lm:t_layout} to $T_{sq}$, and shall argue that the result is a $1$-stack $1$-queue layout of $D^{sq}$.
	First we show that $\lambda_s \le 1$. Consider the set, $\Vleaf(T_s)$, of the leaf nodes of $T_s$, which is a subtree of $T_{sq}$.
	We have $\Sh(x) = 1$ for $x \in \Vleaf(T_s)$ and $\Sh(x) = 0$ for all other nodes, $x \in V(T_{sq}) \setminus \Vleaf(T_s)$.
	By construction, the blue edges of $T_{sq}$ form a matching and are not incident to nodes $\Vleaf(T_s)$.
	Thus,
	\begin{equation*}
		\begin{aligned}
			\lambda_s & = \max_{x \in V(T_{sq})} \Big( \Sh(x) + \sum_{(y, x) \in E^s} \Kh(y,x) + \sum_{(x, y) \in E^s} \Kh(x, y) \Big) \\
			& \le \max \Big\{ \max_{x \in \Vleaf(T_s)} \big( \Sh(x) \big),
			\max_{x \in V(T_{sq}) \setminus \Vleaf(T_s)} \big( \sum_{(y, x) \in E^s} \Kh(y,x) + \sum_{(x, y) \in E^s} \Kh(x, y) \big) \Big\} \\
			& \le \max \Big\{ 1,
			\max_{x \in V(T_{sq}) \setminus \Vleaf(T_s)} \sum_{(y, x) \in E^s} \Kh(y,x) \Big\} = 1.
		\end{aligned}
	\end{equation*}

	Now we show that $\lambda_q \le 1$. By construction, $\Qh(x) = 1$ for $x \in \Vleaf(T_q)$ and $\Qh(x) = 0$ for $x \in V(T_{sq}) \setminus \Vleaf(T_q)$. The red edges of $T_{sq}$ (that is, $E^q(T_{sq})$) are not incident to the leaves of $T_q$, which implies that
	the set $\{ (y,z) \in E^q : x \le_{\sigma} z \}$ is empty for $x \in \Vleaf(T_q)$.
	Every non-leaf node of $T_{sq}$ has at most one outgoing incident red edge in the layout of $T_{sq}$, that is,
	the set $\{ (y,z) \in E^q \mid y \le_{\sigma} x \le_{\sigma} z \}$ contains at most one edge.
	Hence,
	\begin{equation*}
		\begin{aligned}
			\lambda_q & = \max_{x \in V(T_{sq})} \Big( \Qh(x) +  \max_{y \in V(T_{sq}) \colon y \le_{\sigma} x}  \sum_{(y,z) \in E^q \colon x \le_{\sigma} z} \Kh(y, z)  \Big) \\
			& \le \max \Big\{
			\max_{x \in \Vleaf(T_q)} \big( \Qh(x) \big),
			\max_{x \in V(T_{sq}) \setminus \Vleaf(T_q)} \Big( \max_{y \in V(T_{sq}) \colon y \le_{\sigma} x}  \sum_{(y,z) \in E^q \colon x \le_{\sigma} z} \Kh(y, z) \Big)
			\Big\} = 1.
		\end{aligned}
	\end{equation*}

	Therefore, subdivision $D^{sq}$ of $G$ admits a $1$-stack $1$-queue layout by \cref{lm:t_layout}.
\end{proof}

We now prove \cref{thm:equiv}, in particular that \cref{open:rel1,open:rel2} are equivalent.

\NonSeparatedEquivalent*

\begin{proof}
	It is immediate that \eqref{equ:nonsep-4} implies \eqref{equ:nonsep-1}, \eqref{equ:nonsep-2}, and \eqref{equ:nonsep-3}.
        That \eqref{equ:nonsep-1}$\iff$\eqref{equ:nonsep-2} is proven by Dujmovi{\'c} and Wood~\cite{DW05}.

	Now we show that \eqref{equ:nonsep-2}~$\Rightarrow$~\eqref{equ:nonsep-4}.
	Assume that every $3$-stack graph has a bounded queue number, $C \in \Oh(1)$.
	Let $G$ be an $s$-stack $q$-queue graph. By \cref{lm:subdiv_mixed_3}, $G$ admits a $3$-stack
	subdivision, $D$, with at most $k = 2 \lceil \log_2(\max(s,q)) \rceil + 3$ division vertices per edge.
	By the assumption, $\qn(D) \le C$. Note that $G$ is an $r$-shallow minor of $D$ for
	$r = (k+1)/2 = \lceil \log_2 (\max(s,q)) \rceil + 2$.
	By \cref{lm:contraction}, we get
	\[
		\qn(G) \le 	(2r + 1) \big(2 \qn(D) \big)^{2r + 1}
		= (2 \lceil \log_2(\max(s,q)) \rceil + 5) \big(2 C \big)^{2 \lceil \log_2(\max(s,q)) \rceil + 5},
	\]
	which proves that $G$ has a bounded queue number, as long as $s,q$ and $C$ are constants.

	Similarly we show that \eqref{equ:nonsep-3}~$\Rightarrow$~\eqref{equ:nonsep-4}.
	Suppose that every $1$-stack $1$-queue graph has queue number at most $C \in \Oh(1)$.
	Let $G$ be an $s$-stack $q$-queue graph. By \cref{lm:subdiv_mixed_11}, $G$ admits a $1$-stack $1$-queue
	subdivision, $D$, with $k = 4 \lceil \log_2 (\max(s, q)) \rceil + 6$ division vertices per edge.
	By the assumption, $\qn(D) \le C$, and $G$ is an $r$-shallow minor of $D$ for
	$r = \lceil (k+1)/2 \rceil$. Again by \cref{lm:contraction}, we get
	\[
		\qn(G) \le 	(2r + 1) \big(2 \qn(D) \big)^{2r + 1}
		= (4 \lceil \log_2 (\max(s,q)) \rceil + 8) \big(2 C \big)^{4 \lceil \log_2 (\max(s,q)) \rceil + 8},
	\]
	which shows that $G$ has a bounded queue number, as long as $s,q$ and $C$ are constants.
\end{proof}

\section{Separated Layouts}
\label{sec:separated-layouts}

In this section we investigate separated mixed layouts of bipartite graphs and \cref{open:rel3}.
Recall that a vertex order $\sigma$ of a bipartite graph $G = (V, E)$ with bipartition $(A,B)$ is \df{separated}
if all vertices of $A$ precede all vertices of $B$ in $\sigma$ (or vice versa).

\begin{observation}
	\label{obs:separated-stack-is-queue}
	Every separated $s$-stack $q$-queue layout of a bipartite graph $G = (V,E)$ with bipartition $(A,B)$ can be transformed to a separated $q$-stack $s$-queue layout by reversing the order of all vertices in $A$ (or alternatively all vertices in $B$).
\end{observation}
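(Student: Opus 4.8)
The plan is to show that reversing the order of $A$ interchanges crossings and nestings for every pair of edges, so that stacks and queues swap roles. First I would fix the separated order $\sigma$ with all of $A$ before all of $B$, and consider two edges $e_1 = (a_1,b_1)$ and $e_2 = (a_2,b_2)$ with $a_1,a_2 \in A$ and $b_1,b_2 \in B$. The structural observation that makes everything work is that, because $\sigma$ is separated, both $A$-endpoints precede both $B$-endpoints; viewed as intervals the two edges therefore always overlap, so any two edges that do \emph{not} share an endpoint are either crossing or nesting. In a separated layout the ``independent'' case (neither crossing nor nesting) simply cannot occur for distinct endpoints, and edges sharing an endpoint stay adjacent under any reordering that keeps $A$ before $B$.

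Next I would carry out the short case analysis under the reversed order $\sigma'$ of $A$, with $B$ kept fixed. Assuming $a_1 <_\sigma a_2$, so that $a_2 <_{\sigma'} a_1$, I distinguish the two positions of $b_1,b_2$. If $b_1 <_\sigma b_2$, the crossing pattern $a_1 <_\sigma a_2 <_\sigma b_1 <_\sigma b_2$ becomes $a_2 <_{\sigma'} a_1 <_{\sigma'} b_1 <_{\sigma'} b_2$, which is a nesting; if $b_2 <_\sigma b_1$, the nesting $a_1 <_\sigma a_2 <_\sigma b_2 <_\sigma b_1$ becomes $a_2 <_{\sigma'} a_1 <_{\sigma'} b_2 <_{\sigma'} b_1$, which is a crossing. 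Thus reversing $A$ exactly swaps crossings and nestings for every independent pair, while pairs sharing an endpoint remain adjacent in both orders. (Reversing $B$ instead is entirely symmetric.)

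Finally I would lift this to the level of the layout. A stack $E_i$ in $\sigma$ contains no crossing pair; since every independent pair is a crossing or a nesting, each independent pair in $E_i$ must be a nesting in $\sigma$, and hence becomes a crossing in $\sigma'$, so $E_i$ has no nesting pair in $\sigma'$ and is a queue. Symmetrically, each queue of $\sigma$ becomes a stack of $\sigma'$. Therefore the $s$ stacks and $q$ queues turn into $s$ queues and $q$ stacks, yielding the desired separated $q$-stack $s$-queue layout. An equivalent and even shorter route uses the grid representation of \cref{fig:grid}: reversing $A$ mirrors the grid along one axis, turning monotonically increasing point sets (queues) into monotonically decreasing ones (stacks) and vice versa. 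I do not expect any genuine obstacle; the only point needing a little care is confirming that the separated condition forces every pair of independent edges to be comparable as a crossing or a nesting, which is precisely what makes the swap exact.
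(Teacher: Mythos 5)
Your proof is correct and matches the paper's reasoning: the paper treats this as an immediate observation, justifying it via the grid representation (reversing one part mirrors the reduced adjacency matrix along one axis, exchanging increasing and decreasing point sets), which is exactly the shorter route you mention at the end. Your explicit case analysis showing that reversal swaps crossings and nestings for every independent pair is just a spelled-out version of the same fact, with the key point (that in a separated order every pair of independent edges either crosses or nests) correctly identified.
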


Reversing the order of some consecutive vertices (as in \cref{obs:separated-stack-is-queue}) is the simplest modification we could do to a given layout.
It turns out that this is already enough to show that graphs with separated $1$-stack $1$-queue layouts have a constant queue number.

\begin{theorem}
	\label{thm:1s1q}
	Every separated $1$-stack $1$-queue graph admits a separated $4$-queue layout.
\end{theorem}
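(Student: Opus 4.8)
The plan is to pass to the grid (reduced adjacency) picture of \cref{fig:grid}. Fix the given separated layout with $A$ ordered $a_1,\dots,a_m$, $B$ ordered $b_1,\dots,b_n$, and write $S$ for the stack and $Q$ for the queue. Each edge $(a_i,b_j)$ becomes the grid point $(i,j)$; two edges nest exactly when their points form a strictly decreasing pair, so a set of pairwise nesting edges (a \emph{rainbow}) is a strictly decreasing chain. Here I would invoke the standard fact (an instance of Mirsky's theorem, since nesting is a strict partial order and a queue is an antichain) that, for a \emph{fixed} vertex order, the minimum number of queues needed to partition the edges equals the maximum rainbow size. Thus producing a separated $4$-queue layout is exactly the task of permuting the rows (reordering $A$) and the columns (reordering $B$) so that the grid has no decreasing chain of length $5$. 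Since in the original orders $S$ is a decreasing and $Q$ an increasing staircase, a clean sufficient target is to reorder so that the points of $S$ and of $Q$ \emph{each} span a decreasing chain of length at most $2$: any rainbow of the whole graph meets $S$ in an $S$-rainbow and $Q$ in a $Q$-rainbow, so two chains per colour give total rainbow at most $4$.

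The first idea is the reversal of \cref{obs:separated-stack-is-queue}: reversing $B$ turns the decreasing staircase $S$ into an increasing one (an $S$-rainbow of size $1$), but it simultaneously turns the increasing staircase $Q$ into a decreasing one, which is a single long rainbow. Hence no single global reversal, and in fact no reordering of only one side, can bound both colours at once: a vertex shared by $S$ and $Q$ is ``pulled'' in opposite directions by the two staircases, and reordering one side alone runs into an Erd\H{o}s--Szekeres wall, since a permutation whose longest increasing and longest decreasing subsequences are both short must itself be short. This tension is the step I expect to be the main obstacle. The resolution I would pursue is to reorder \emph{both} $A$ and $B$ with genuinely non-monotone permutations: after reversing the columns, $S$ and $Q$ live as an increasing and a decreasing staircase in one common coordinate system, and I would record for each vertex its rank along $Q$ and its rank along $S$ (the latter being essentially the reversed column rank) and define the new row and column orders by an explicit interleaving of these two ranks.

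Concretely, I would build the two orders so that the edges of each colour are covered by two increasing chains: intuitively each staircase is one monotone sequence, so after the interleaving it splits into at most two monotone pieces, one for the vertices on which the two rank functions agree and one for the conflicting (shared) vertices, yielding an $S$-rainbow and a $Q$-rainbow of size at most $2$ each. The crucial lemma to establish is that the shared vertices, which are responsible for the second chain of each colour, are themselves consistently ordered by the two staircases, so they never create a monochromatic rainbow of size $3$; for this I would exploit that $S$ being non-crossing and $Q$ being non-nesting in the \emph{same} original separated order forces the neighbourhoods of $S$ and $Q$ at a common vertex to interact monotonically. Once both orders are fixed and each colour has rainbow at most $2$, Mirsky's theorem delivers the separated $4$-queue layout. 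I expect this monotonicity-of-the-shared-structure claim to be the hardest point; a clean alternative worth attempting in parallel is to prove directly that such a graph has (non-separated) queue number at most $2$ and then apply \cref{cor:separation} to obtain a separated $4$-queue layout, trading the explicit double reordering for an interleaving of $A$ and $B$.
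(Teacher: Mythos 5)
Your high-level target is exactly the paper's: permute both $A$ and $B$ so that the stack $S$ and the queue $Q$ each decompose into at most two weakly increasing chains in the grid, giving four queues (you do not even need Mirsky for the final step --- four explicit increasing chains are four queues). The framing, including the observation that reordering only one side cannot work, is sound. But the proof has a genuine gap: the construction is never actually given. ``Record the rank along $Q$ and along $S$ and define the new orders by an explicit interleaving of these two ranks'' does not specify a permutation, and the ``crucial lemma'' that the shared vertices are consistently ordered is exactly the content you would need to prove --- you acknowledge this yourself. As written, the argument is a plan with its central step missing, and the parallel alternative (show $\qn(G)\le 2$ directly and apply \cref{cor:separation}) is likewise only stated, not established.

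The missing idea is much simpler than a rank interleaving. Extend $S$ and $Q$ to inclusion-maximal weakly decreasing and weakly increasing subsets of the $|A|\times|B|$ grid; two such maximal staircases of opposite monotonicity must share a common point $(a_i,b_j)$. Now reverse the \emph{prefix} of the columns $a_1,\dots,a_i$ and the \emph{prefix} of the rows $b_1,\dots,b_j$. Since $Q$ passes through $(a_i,b_j)$, its edges with column index at most $i$ all have row index at most $j$, so both of their coordinates are reversed and they remain an increasing chain; the $Q$-edges with column index greater than $i$ have both coordinates untouched in relative order and form a second increasing chain. Symmetrically, $S$ splits into two increasing chains according to whether the row index is at most $j$. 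This is the paper's proof, and it replaces your unproved consistency lemma with a single concrete choice of crossing point and two prefix reversals. I would encourage you to either carry out this version or make your interleaving fully explicit and prove the splitting claim; in its current form the proposal does not yet constitute a proof.
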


\begin{proof}
    Let $G = (V,E)$ be a bipartite graph with bipartition $(A,B)$ admitting a separated $1$-stack $1$-queue layout with vertex order $\sigma$.
    Consider the reduced adjacency matrix $M$ with columns $A = (a_1,\ldots,a_m)$ and rows $B = (b_1,\ldots,b_n)$ ordered according to $\sigma$.
    The edges $S \subset E$ in the stack form a monotonically weakly decreasing subset of $|A| \times |B|$.
    The edges $Q \subset E$ in the queue form a monotonically weakly increasing subset of $|A| \times |B|$.
    Without loss of generality we can assume (by adding edges to the graph) that $S$ and $Q$ correspond to inclusion-maximal such subsets;
    see \cref{fig:11-separated}.

    \begin{figure}[!b]
        \centering
        \includegraphics{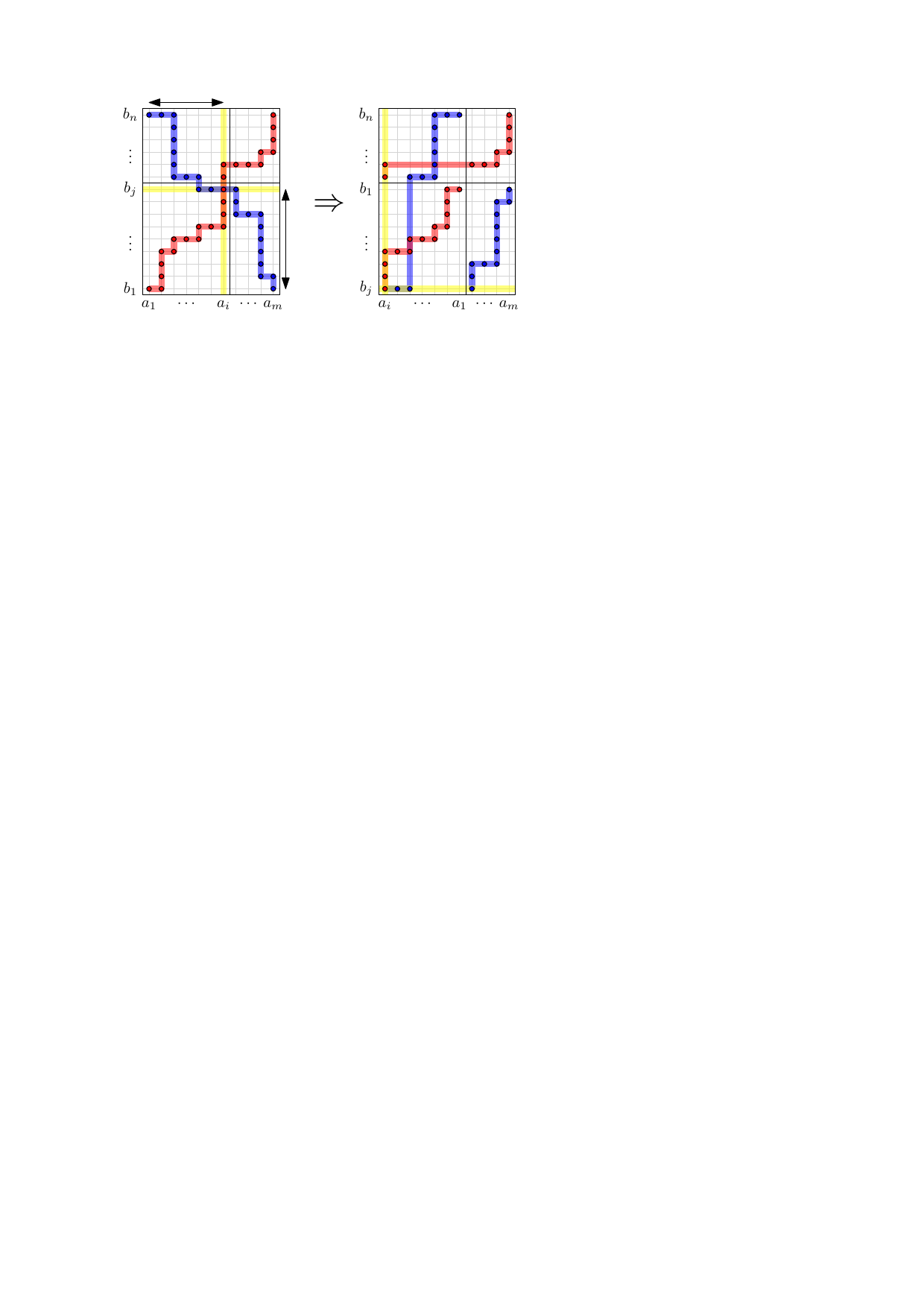}
        \caption{Obtaining a separated $4$-queue layout from a separated $1$-stack $1$-queue layout by reversing column and row orders. The blue edge color of the transformed stack edges is preserved for clarity.}
        \label{fig:11-separated}
    \end{figure}

    Let $(a_i,b_j) \in A \times B$ be a point/edge that is contained in both, $S$ and $Q$.
    Now consider the vertex order $a_i,\ldots,a_1,a_{i+1},\ldots,a_m$ of the columns obtained by reversing the order of $a_1,\ldots,a_i$.
    Similarly, consider the vertex order $b_j,\ldots,b_1,b_{j+1},\ldots,b_n$ of the rows obtained by reversing the order of $b_1,\ldots,b_j$.
    Let $\sigma'$ denote the resulting separated vertex order of $G$.
    Now the edges in $Q$ incident to $a_1,\ldots,a_i$ form a queue $Q_1$ in $\sigma'$.
    Also the remaining edges in $Q$, incident to $a_{i+1},\ldots,a_m$ form a queue $Q_2$ in $\sigma'$.
    Similarly, the edges in $S$ incident to $b_1,\ldots,b_j$ form a queue $Q_3$ in $\sigma'$.
    Also the remaining edges in $S$, incident to $b_{j+1},\ldots,b_m$ form a queue $Q_4$ in $\sigma'$;
    see again \cref{fig:11-separated}.
    This is a separated $4$-queue layout of $G$. 
\end{proof}

Note that by applying~\cref{obs:separated-stack-is-queue}, separated $1$-stack $1$-queue graphs also admit a separated $4$-stack layout. However, we do not know whether the bound of $4$ in \cref{thm:1s1q} is best-possible and remark that $K_{3,3}$ admits a separated $1$-stack $1$-queue layout but its separated stack/queue number is $3$.

\medskip

The simple operation of reversing the order of some consecutive vertices can be employed in a ``checkerboard'' fashion.
Consider a given separated mixed layout (e.g., with $s$ stacks and $q$ queues).
Assume we have partitioned the reduced adjacency matrix by means of a superimposed grid with a checkerboard odd-even pattern, in such a way that odd grid cells contain no stack edges and even grid cells contain no queue edges.
Then, the vertex order of every second row and column can be reversed to derive a separated pure queue layout.
Finding such a grid refinement is always possible, that is, down to individual rows and columns, however the derived $\ssn$ and $\sqn$ are dependent on the grid granularity and thus may not be bounded by $\smn$.
An example for specific separated $s$-stack $q$-queue layouts with bounded $\sqn$ number are grids where each cell contains either an increasing or decreasing diagonal.
In this case, grid columns and rows can be halved in order to apply the checkerboard approach; see \cref{fig:cb-refinement}.

\begin{figure}[!tb]
    \centering
	\begin{subfigure}[b]{.23\linewidth}
		\center
		\includegraphics[page=1]{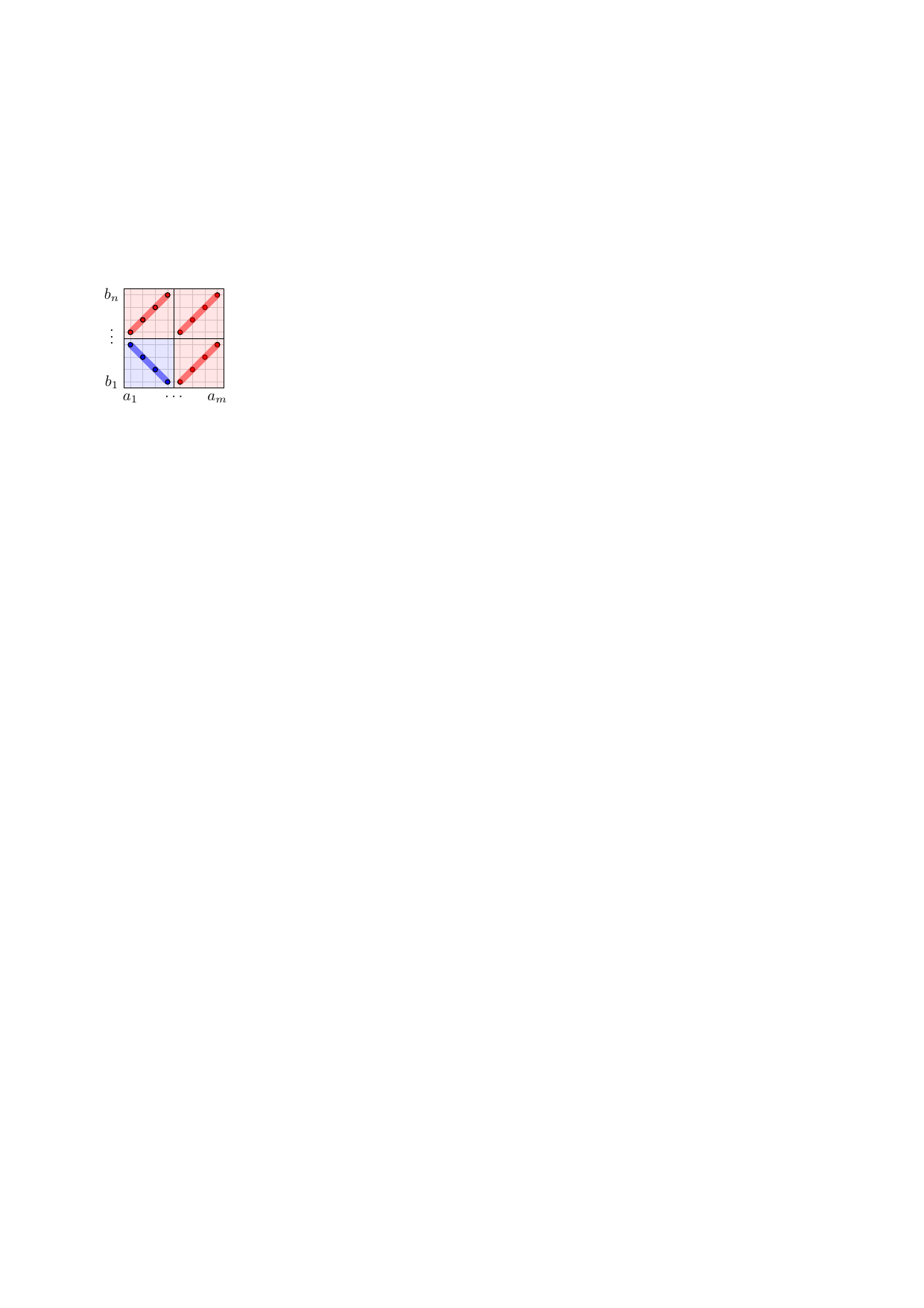}
		\subcaption{}
            \label{fig:cb-refinement-a}
	\end{subfigure}
	\hfill
	\begin{subfigure}[b]{.23\linewidth}
		\center
		\includegraphics[page=2]{figures/checkerboard}
		\subcaption{}
            \label{fig:cb-refinement-b}
	\end{subfigure}
        \hfill
        \begin{subfigure}[b]{.23\linewidth}
		\center
		\includegraphics[page=3]{figures/checkerboard}
		\subcaption{}
            \label{fig:cb-refinement-c}
	\end{subfigure}
        \hfill
        \begin{subfigure}[b]{.23\linewidth}
		\center
		\includegraphics[page=4]{figures/checkerboard}
		\subcaption{}
            \label{fig:cb-refinement-d}
	\end{subfigure}
	\caption{Transforming a separated mixed layout of a grid with diagonals \textbf{(a)} into a separated pure layout \textbf{(d)} by halving columns and rows \textbf{(b)} and reversing every second row and column \textbf{(c)}. The blue edge color of the transformed stack edges is preserved for clarity.}
	\label{fig:cb-refinement}
\end{figure}

However there exist graphs, where such operations do not suffice and more involved row and column permutations are necessary.


In the following, we provide a challenging graph that admits a separated $1$-stack $2$-queue layout and a fairly simple $4$-queue pure layout. However, the grid granularity in the ``checkerboard'' strategy has to be super-constant in order to transform the mixed into a pure layout.

Let us consider, for $n = 2^k$ being any power of~$2$, the following subcubic bipartite graph $G_n = (V \cup U, E)$ that consists of two parts $V = \{v_0,\ldots,v_{n-1}\}$, $U = \{u_0,\ldots, u_{n-1}\}$ and three types of edges:

\begin{itemize}
	\item \emph{red} edges: \qquad $(v_i, u_i)$ for $i = 0,\ldots,n-1$
	\item \emph{brown} edges: \qquad $(v_i, u_{2i})$, $(v_i, u_{2i+1})$ for $i = 0,\ldots,\frac{n}{2}-1$
	\item \emph{blue} edges: \qquad $(v_i, u_{2n-2i-2})$, $(v_i, u_{2n-2i-1})$ for $i = \frac{n}{2},\ldots,n-1$
\end{itemize}

Each $G_n$ admits a separated $1$-stack $2$-queue layout by simply taking the vertex order $v_0,\ldots,v_{n-1},u_0,\ldots,u_{n-1}$, in which red and brown edges for one queue each, while blue edges form one stack; see \cref{fig:g2-grid}.
Note how simply structured these graphs are, and how close this separated mixed layout is to a ``nice grid with diagonals''.
And in fact, there is also a nice separated $4$-queue pure layout as shown in \cref{fig:graph2_queues}.
However, transforming the mixed into a pure layout is non-trivial.

Let us describe the column and row permutation that does the transformation.
These involve periodically reversing and displacing consecutive rows/columns in incrementally increasing blocks.
In particular, consider these four permutation functions $f_1,f_2,f_3,f_4$: 
\begin{align*}
	f_1(x_1,x_2,x_3,x_4) &= (x_1,x_2,x_3,x_4), & f_2(x_1,x_2,x_3,x_4) &= (x_2,x_1,x_4,x_3), \\
	f_3(x_1,x_2,x_3,x_4) &= (x_3,x_4,x_1,x_2), & f_4(x_1,x_2,x_3,x_4) &= (x_4,x_3,x_2,x_1)
\end{align*}
For the graph $G_n$ of size $n = 2^k$, the rows and columns in the initial mixed layout are divided into blocks of consecutive four.
For the $i$-th block, $i = 1,\ldots,\frac{n}{4}$, the permutation function $f_j$ with $j = i \bmod 4$ is applied.
In future steps, each block is treated as a single entity, keeping its internal order intact.
Then again, the rows and columns are divided into blocks of four consecutive blocks, and permutation $f_{i \bmod 4}$ is applied to the $i$-th block of blocks.
This pattern is continued with block sizes increasing by a factor of $4$ until $n$ is reached or exceeded.

\begin{figure}[!tb]
	\begin{subfigure}[b]{.49\linewidth}
		\includegraphics[page=1,height=6cm]{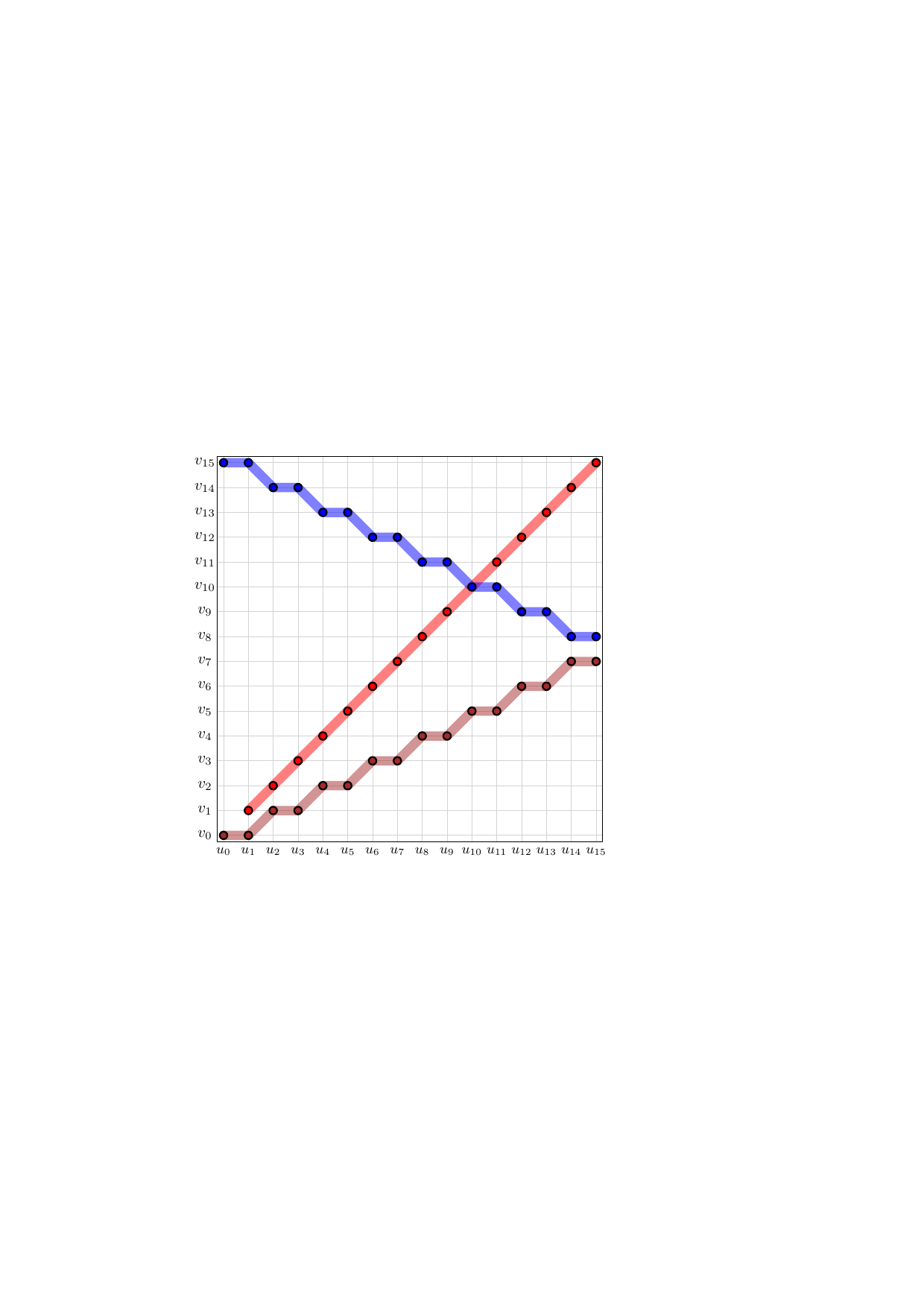}
		\subcaption{}
		\label{fig:g2-grid}
	\end{subfigure} \hfill
	\begin{subfigure}[b]{.49\linewidth}
		\includegraphics[page=2,height=6cm]{figures/graph2}
		\subcaption{}
		\label{fig:graph2_queues}
	\end{subfigure}
	\caption{$1$-stack $2$-queue separated layout of $G_{16}$ \textbf{(a)} and $4$-queue separated layout of $G_{16}$ \textbf{(b)}}
	\label{fig:graph2}
\end{figure}

We conclude that even for low-degree graphs with $\smn(G) = 3$, it is not obvious how to transform a separated mixed layout into a separated pure layout, say with few queues.

\medskip

While we do not know how to generalize~\cref{thm:1s1q} even just to separated $1$-stack $2$-queue layouts, we can narrow down the difficult case of \cref{open:rel3}.
If~\cref{open:rel3} is answered positively, we can find an $f(s,q)$-queue layout for any bipartite graph admitting a separated $s$-stack $q$-queue layout.
In~\cref{thm:same_order} we show three equivalent formulations of this problem.
First, observe that in the previous challenging example, the transformation from the separated mixed to the separated pure layout applies the same column and row permutation.
Now,~\cref{thm:same_order} states that we may always assume that rows and columns are identically permuted when transforming a separated mixed layout into a separated pure layout.

Further, we show in~\cref{lm:sep-3} that every separated $s$-stack $q$-queue layout can be transformed into a separated $1$-stack $f(s,q)$-queue layout, while this transformation does not change the separated queue number by too much.
This implies that one can restrict the attention to separated $1$-stack $q$-queue
layouts, for solving \cref{open:rel3}.

\SeparatedEquivalent*

\begin{proof}
    Observe that \eqref{equ:sep-1} immediately implies~\eqref{equ:sep-3}, which implies~\eqref{equ:sep-4}.
    That~\eqref{equ:sep-2} implies~\eqref{equ:sep-1} is also immediate, when adding isolated vertices to guarantee $|A|=|B|$.
    The proofs of the other directions are deferred, in particular see~\cref{lm:sep-2} for \eqref{equ:sep-1}~$\Rightarrow$~\eqref{equ:sep-2} and \cref{lm:sep-3} for \eqref{equ:sep-3}~$\Rightarrow$~\eqref{equ:sep-1}.
    For \eqref{equ:sep-4}, we show equivalence by proving that \eqref{equ:sep-4} implies \eqref{equ:sep-3} in~\cref{lm:mn6}.
 \end{proof}

\begin{lemma}
    \label{lm:sep-2}
    If there is a function $f$ such that every separated $s$-stack $q$-queue graph admits a separated $f(s,q)$-queue layout, then every separated $s$-stack $q$-queue layout of a bipartite graph $G = (A\cup B,E)$ with $|A|=|B|$ can be transformed into a separated $2f(s,q+1)^2$-queue layout by applying the same permutation to $A$ and $B$.
\end{lemma}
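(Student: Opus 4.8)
The plan is to apply hypothesis \eqref{equ:sep-1} to an auxiliary graph obtained by augmenting $G$ with a ``diagonal'' perfect matching, and then to repair the mismatch between the resulting row and column orders using the bipartite part of the Riffle-Lemma (\cref{lm:321}).

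First I would fix the given separated $s$-stack $q$-queue layout of $G = (A \cup B, E)$ and index the parts as $A = (a_1,\dots,a_m)$ and $B = (b_1,\dots,b_m)$ in this order, so that the reduced adjacency matrix places each edge $(a_i,b_j)$ at grid cell $(i,j)$. I augment $G$ to $G^{+}$ by adding the perfect matching $M = \{(a_i,b_i) : 1 \le i \le m\}$. In the current order the points of $M$ occupy the main diagonal of the grid, which is a monotonically increasing set and hence a single queue; thus $G^{+}$ is a separated $s$-stack $(q{+}1)$-queue graph. Applying \eqref{equ:sep-1} to $G^{+}$ yields a separated $f(s,q{+}1)$-queue layout $\sigma'$; let $\pi_A$ be the order of $A$ and $\pi_B$ the order of $B$ that $\sigma'$ induces.

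Next I would read off the relevant structure from how $\sigma'$ treats the matching. The at most $f(s,q{+}1)$ queues partition $M$ into increasing subsequences, so $M$ meets some $t \le f(s,q{+}1)$ queues, inducing a partition of the index set into blocks $I_1,\dots,I_t$ and a corresponding partition $B_1,\dots,B_t$ of $B$ with $B_\ell = \{b_i : i \in I_\ell\}$. Since two matching points in a common queue are comparable in the increasing order, for all $i,j$ in a common block $I_\ell$ the position of $a_i$ in $\pi_A$ precedes that of $a_j$ if and only if the position of $b_i$ in $\pi_B$ precedes that of $b_j$. Read through the matching, this says precisely that on each block $B_\ell$ the row order $\pi_B$ and the column order $\pi_A$ agree.

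Finally I would reorder only the rows: keep $A$ ordered by $\pi_A$ and re-order $B$ so that $b_i$ takes the position of $a_i$ in $\pi_A$. The resulting separated order $\sigma''$ applies the \emph{same} permutation to $A$ and to $B$, as required. To bound its queue number I would invoke the bipartite part of \cref{lm:321} on the $f(s,q{+}1)$-queue layout of $G$ obtained by restricting $\sigma'$ to $E$, using the partition consisting of the single block $A$ together with the blocks $B_1,\dots,B_t$. The within-block relative orders are preserved when passing from $\sigma'$ to $\sigma''$ (the order of $A$ is unchanged, and each $B_\ell$ inherits the agreement established above), so $\sigma''$ is a legitimate riffle. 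With one block on the $A$-side and $t$ blocks on the $B$-side, \cref{lm:321} yields a separated $2 \cdot 1 \cdot t \cdot f(s,q{+}1) \le 2 f(s,q{+}1)^2$-queue layout, which is the claimed bound. The main obstacle is the middle step: verifying that ``two matching edges lie in a common queue'' translates exactly into ``the two orders agree on the corresponding block,'' and that this agreement is precisely the relative-order-preservation hypothesis of the Riffle-Lemma; once this correspondence is pinned down, the remaining points (the diagonal costing one extra queue, the final permutation being common to both parts, and the arithmetic $2tf \le 2f^2$) are routine.
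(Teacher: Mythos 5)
Your proposal is correct and follows essentially the same route as the paper's proof: augment $G$ with the identity matching to get a separated $s$-stack $(q{+}1)$-queue graph, apply the hypothesis, use the queues covering the matching to define the blocks, restore the matching to the diagonal, and finish with the bipartite Riffle-Lemma to get the $2f(s,q{+}1)^2$ bound. The only (immaterial) difference is that the paper keeps the order of $B$ and re-sorts $A$ into blocks $A_1,\dots,A_k$, whereas you keep the order of $A$ and re-sort $B$ into blocks $B_1,\dots,B_t$; the two are mirror images of the same argument.
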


In other words, \cref{lm:sep-2} proves that \eqref{equ:sep-1} implies \eqref{equ:sep-2} in \cref{thm:same_order}.

\begin{proof}
    Let $u_1,\ldots,u_n$ and $v_1,\ldots,v_n$ be the order of $A$ and $B$ in the given separated $s$-stack $q$-queue layout of $G$.
    We add the ``identity'' queue $Q$, where for each $i$, $1 \leq i \leq n$, there is the edge $(u_i,v_i)$ (if not already present in $G$).
    Let $G' = (A \cup B, E \cup Q)$ be the resulting bipartite graph.
    By assumption, there exists a separated $f(s,q+1)$-queue layout of $G'$ with vertex order $\sigma_0$.
    We are looking for another vertex order $\sigma$ such that, compared to the initial order, $A$ and $B$ are permuted the same, that is, the columns and rows in the reduced adjacency matrix are identically permuted.
    Since we added the identity queue $Q$ to obtain $G'$ in the beginning, this is equivalent to restoring $Q$ to the diagonal in the matrix; see~\cref{fig:thm1}.

    \begin{figure}[!tb]
	\begin{subfigure}[b]{.28\linewidth}
		\center
		\includegraphics[page=1,width=\linewidth]{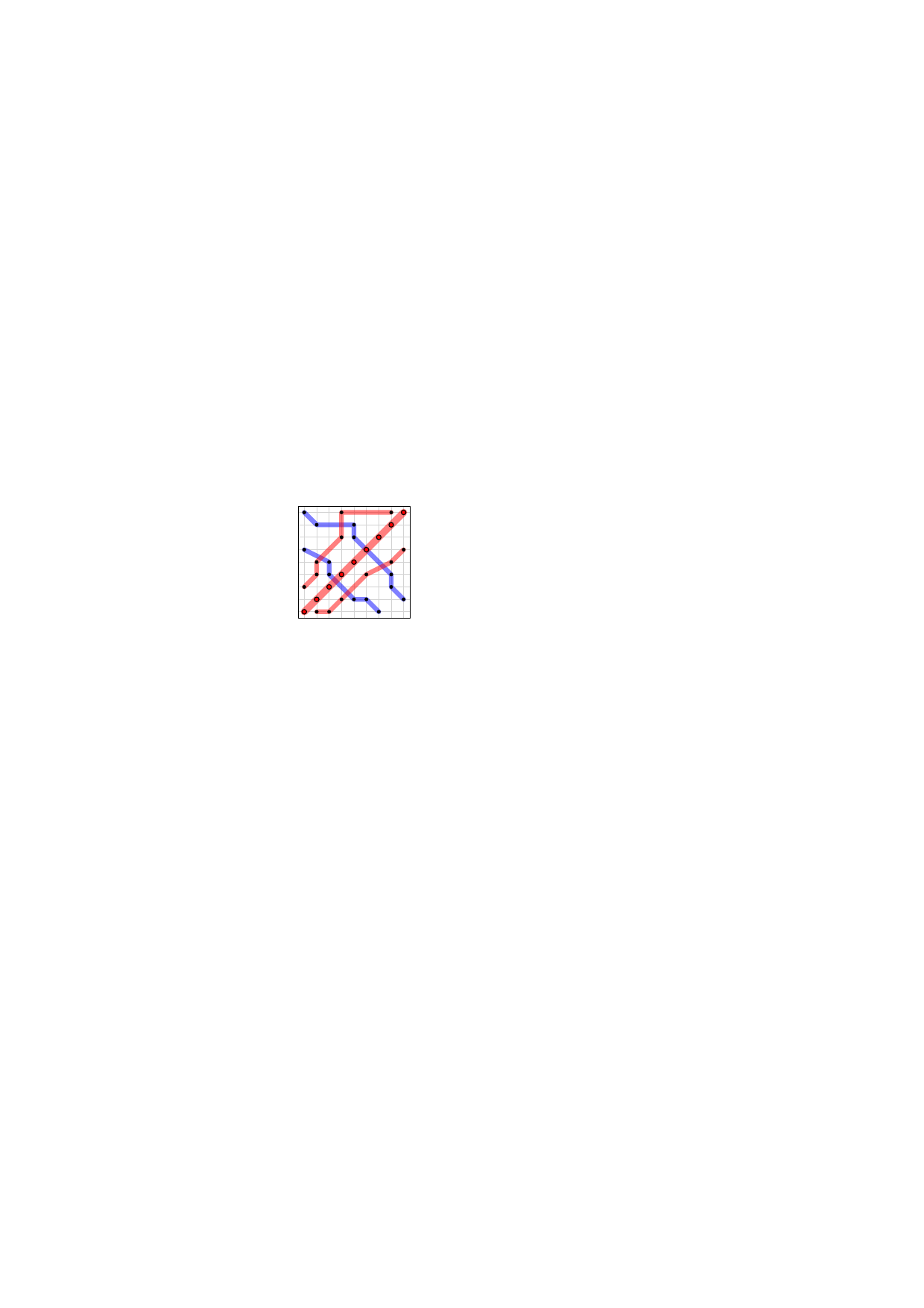}
		\subcaption{}
		\label{fig:thm1-1}
	\end{subfigure}
	\hfill
	\begin{subfigure}[b]{.28\linewidth}
		\center
		\includegraphics[page=2,width=\linewidth]{figures/identical-permutation}
		\subcaption{}
		\label{fig:thm1-2}
	\end{subfigure}
	\hfill
	\begin{subfigure}[b]{.28\linewidth}
		\center
		\includegraphics[page=3,width=\linewidth]{figures/identical-permutation}
		\subcaption{}
		\label{fig:thm1-3}
	\end{subfigure}
        \caption{
            Illustration of the proof of \cref{lm:sep-2}:
            Adding the identity queue \textbf{(a)}, obtaining a separated queue layout \textbf{(b)}, and restoring the identity queue \textbf{(c)}.}
        \label{fig:thm1}
    \end{figure}

    To that end, we shall apply~\cref{lm:321} to change only the order of vertices in $A$, which corresponds to permuting the columns in the matrix.
    Let $k = f(s,q+1)$ and $E_1,\dots, E_k$ be the queues in the $f(s,q+1)$-queue layout of $G'$ and $A_i = \{u \in A \mid (u,v) \in Q  \cap E_i\}$, $1 \leq i \leq k$.
    We want to apply the Riffle-Lemma (\cref{lm:321}-\eqref{item:riffle-bipartite}) with the partition of $V$ into $A_1,\ldots,A_k$ and $B$.
    To this end, we keep the order of $B$ (the rows), and restore $Q$ as a diagonal by simply sorting the columns according to their row in $Q$.
    Since each $E_i$ is a queue, the relative order within each $A_i$ is kept intact, as required by~\cref{lm:321}.
    Hence, by~\cref{lm:321}-\eqref{item:riffle-bipartite}, the resulting layout has at most $2 \cdot f(s,q+1) \cdot f(s,q+1)$ queues in total.\qedhere
\end{proof}

\begin{lemma} \label{lm:sep-3}
    Let $G$ be a bipartite graph with a separated $s$-stack $q$-queue layout.
    Then there exists another bipartite graph $H$ admitting a separated $1$-stack $(s+q-1)$-queue layout, where $G$ is a $1$-shallow minor of $H$, such that $\qn(G) \in \Oh(\qn(H)^3)$. 
\end{lemma}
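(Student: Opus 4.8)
The plan is to realise $H$ as a subdivision of $G$ and to read the desired layout directly off the reduced adjacency matrix (grid), then to invoke \cref{lm:contraction}. Fix a separated $s$-stack $q$-queue layout of $G$ with columns indexed by $A$ and rows by $B$, so that each stack is a (weakly) decreasing set of grid points and each queue a (weakly) increasing one. Write the stacks as $S_1,\dots,S_s$ and the queues as $Q_1,\dots,Q_q$; we may assume $s\ge 1$, since otherwise $G$ already has a separated $0$-stack $q$-queue layout. We keep $Q_1,\dots,Q_q$ and the single stack $S_1$ untouched, and subdivide every edge of each remaining stack $S_2,\dots,S_s$ \emph{exactly twice}. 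For such an edge $(a_i,b_j)$ this replaces the point by a path $a_i - w_1 - w_2 - b_j$; subdividing twice preserves bipartiteness and places $w_1$ on the $B$-side and $w_2$ on the $A$-side. Let $H$ be the resulting bipartite graph, with parts $A'=A\cup W_2$ (columns) and $B'=B\cup W_1$ (rows), where $W_1,W_2$ collect the inner and outer division vertices, respectively.

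The crux is to position the division vertices so that the whole layout uses only a single stack. The guiding observation is that for a converted stack the three resulting families behave as follows: the two outer halves $(a_i,w_1)$ and $(w_2,b_j)$ can be made increasing, whereas the middle edges $(w_1,w_2)$ are forced to be decreasing -- this is precisely the local obstruction to turning a stack into a pure queue. We exploit it in reverse by \emph{localising} all decreasing behaviour to the middle edges and merging them into $S_1$. Concretely, place all columns of $W_2$ to the left of $A$ and all rows of $W_1$ with indices larger than those of $B$, giving each converted stack $S_t$ its own column block and row block, and arrange these blocks as a decreasing staircase (leftmost columns paired with largest rows). Within $S_t$'s blocks, order the outer division rows increasingly and the inner division columns decreasingly along the stack.

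A direct check then yields the two desired properties. First, each outer family of $S_t$ is a single increasing set: its right half-edges use columns in $W_2$ and rows in $B$, both smaller than the columns in $A$ and rows in $W_1$ used by its left half-edges, so the two halves concatenate into one queue. Second, the middle edges of all converted stacks together with $S_1$ form one decreasing set: every middle edge uses a column in $W_2$ (smaller than $A$) and a row in $W_1$ (larger than $B$), so it lies strictly off $S_1$ in the decreasing direction, and the staircase arrangement keeps distinct blocks mutually decreasing. The original queues $Q_1,\dots,Q_q$ are unaffected since the relative orders within $A$ and within $B$ are preserved. Counting gives one stack ($S_1$ together with all middle edges), the $q$ untouched queues, and exactly one queue per converted stack, i.e.\ $s-1$ further queues, for a total of a separated $1$-stack $(s+q-1)$-queue layout of $H$.

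Finally, $G$ is a $1$-shallow minor of $H$: for each converted edge $(a_i,b_j)$ assign its inner vertex $w_1$ to the branch set of $a_i$ and its outer vertex $w_2$ to the branch set of $b_j$, leaving untouched edges as they are. Each branch set is a star centred at an original vertex, hence has radius $1$, and contracting them recovers $G$ (the edge $(w_1,w_2)$ becomes $(a_i,b_j)$). Applying \cref{lm:contraction} with $r=1$ gives $\qn(G)\le 3\,(2\qn(H))^{3}\in\Oh(\qn(H)^3)$. I expect the main obstacle to be the geometric placement of the second paragraph: arranging the $s-1$ middle families together with the kept stack $S_1$ into one decreasing set while simultaneously forcing each converted stack's two outer half-families to coalesce into a single increasing set, all inside one separated layout; the bipartiteness bookkeeping and the shallow-minor bound are then routine.
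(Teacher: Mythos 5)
Your proof is correct and follows essentially the same route as the paper: replace each edge of all but one stack by a length-three path, place the new vertices in per-stack column/row blocks arranged as a decreasing staircase so that all the middle edges merge with the retained stack into a single stack while the two connector families of each converted stack fit into one queue, and recover $G$ as a $1$-shallow minor by contracting stars before applying \cref{lm:contraction} with $r=1$. The only (immaterial) difference is that the paper shares one division vertex per original vertex per stack rather than using two fresh division vertices per edge, so its $H$ is not literally a subdivision, but the layout and contraction arguments are the same.
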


If (3) of \cref{thm:same_order} holds, then the separated $1$-stack $(s+q-1)$-queue layout of $H$ has bounded queue number, i.e, $\qn(G) \in \Oh(\qn(H)^3) \leq f(s+q-1)$ for some function $f$. Hence $\qn(G) \leq f'(s,q)=f(s+q-1)$, implying (1) of~\cref{thm:same_order}.

\begin{proof}
    Assume we have a bipartite graph $G$, that admits a separated $s$-stack $q$-queue layout.
    We will build a new bipartite graph $H$, such that $H$ has a separated $1$-stack $(s + q - 1)$-queue layout.
    We will then show that for the separated queue numbers of $G$ and $H$, it holds that $\qn(G) \in \Oh(\qn(H)^3)$.
    That is, a small queue number of $H$ implies a small queue number of~$G$.
    Next we make the argument formal.

    Let $G=(A \cup B, E), |A| = n, |B| = m$, $S_1,\ldots,S_s$ be the stacks and $Q_1,\ldots,Q_q$ be the queues in the separated $s$-stack $q$-queue layout.
    We build $H$ as follows.
    The vertices of $H$ are $A  \cup (\bigcup_k U^k) \cup B \cup (\bigcup_k V^k)$, where $A = \{a_1, \dots, a_n\}$ and $B = \{b_1, \dots, b_m\}$ are copied from $G$, while $U^k \subseteq \{u^k_1, \dots, u^k_n\}$ and $V^k \subseteq \{v^k_1, \dots, v^k_m\}$ for any $1 \leq k \leq s-1$ are new. $U^k$ contains a vertex $u_i^k$ for each vertex $a_i$ incident to an edge $(a_i,b_j) \in S_k$. Likewise, $V^k$ contains a vertex $v_j^k$ for each such $b_j$. Vertices in $A$ and $B$ that are not incident to an edge in $S_k$ are omitted in $U^k$ and $V^k$.
    (Note that for each stack $S_k$, the new vertices in $U^k$ and $V^k$ are added to construct parts of the graph $H$ starting from $G$, while the last stack $S_s$ is left as is.)
    In the reduced adjacency matrix of $H$, the new vertices are represented by additional blocks of $n_k$ consecutive rows for each $U^k$ and $m_k$ consecutive columns for each $V^k$, where $n_k$ ($m_k$) is the number of vertices of $A$ ($B$) having an edge in $S_k$. Row blocks of $U^k$ and column blocks  of $V_k$ are subsequent for $k = 1,\ldots,s-1$.
    Intuitively, we move the stack $S_k$ from $A \times B$ to $U^k \times V^k$, $k=1,\ldots,s-1$.
    We leave the last stack $S_s$ with all $q$ initial queues in $A \times B$.
    Finally, we put one new queue each in $A \times U^k$ and $V^k \times B$, $k = 1,\ldots,s-1$.
    \cref{fig:grid_sq} shows the construction and in particular the order of the rows $U^1,\ldots,U^{s-1}$ and columns $V^1,\ldots,V^{s-1}$.

     \begin{figure}[!tb]
        \begin{subfigure}[b]{.4\linewidth}
            \begin{subfigure}[t]{\linewidth}
    		\center
	       	\includegraphics[page=1]{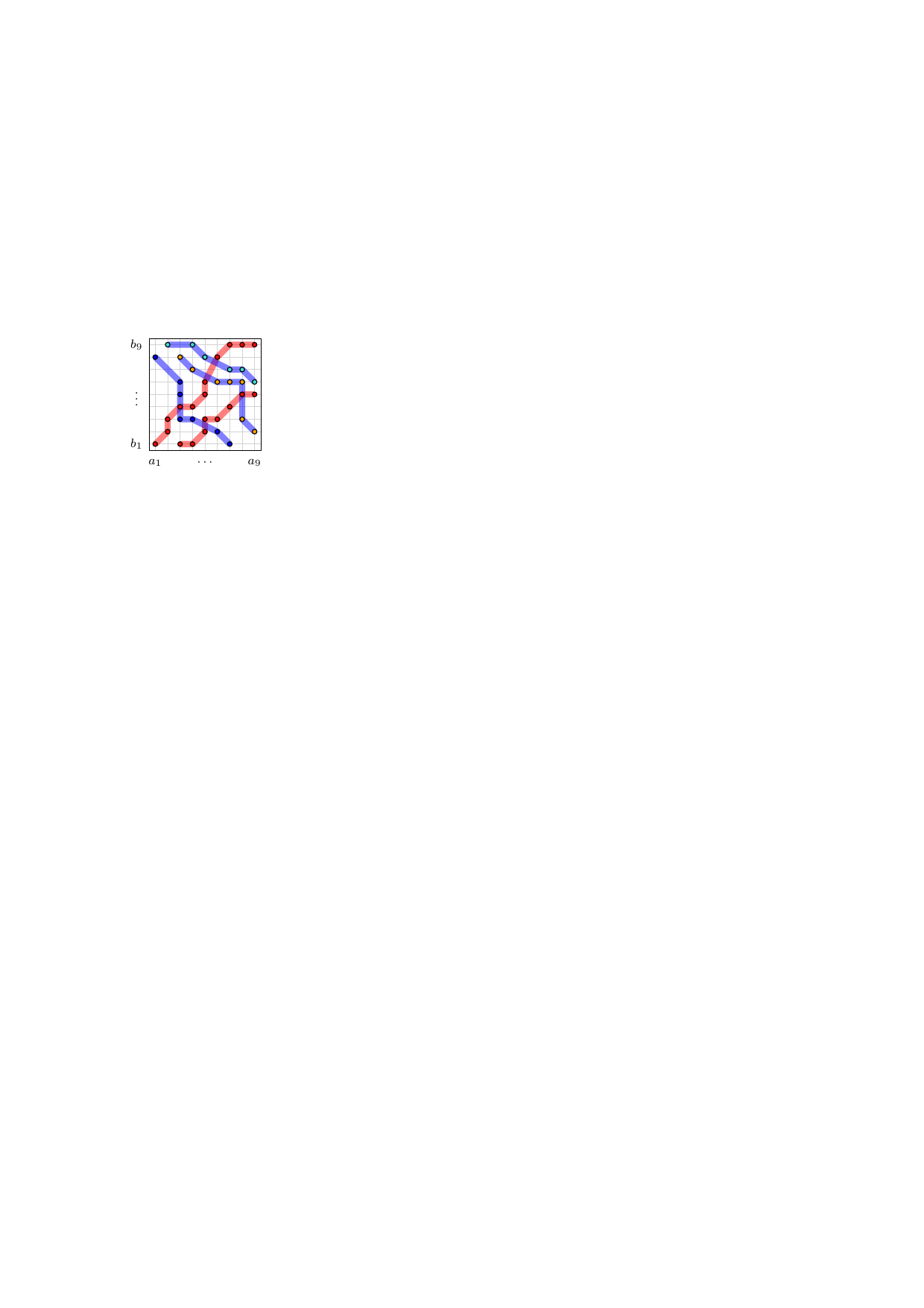}
		      \subcaption{}
            \end{subfigure}
            \bigskip
            \begin{subfigure}[b]{\linewidth}
		      \center
		      \includegraphics[page=4]{figures/1-s-k-q}
		      \subcaption{}
            \end{subfigure}
        \end{subfigure}
	\hfill
	\begin{subfigure}[b]{.59\linewidth}
		\center
		\includegraphics[page=3,width=\linewidth]{figures/1-s-k-q}
		\subcaption{}
	\end{subfigure}
	\caption{
            Transforming a separated $3$-stack $2$-queue layout into a separated $1$-stack $4$-queue layout: The initial graph $G$ \textbf{(a)}, modifying a stack edge~$(a_i, b_j)$ \textbf{(b)}, the constructed graph $H$~\textbf{(c)}.
        }
	\label{fig:grid_sq}
    \end{figure}

    More formally, for every edge $(a_i, b_j) \in Q_1 \cup \ldots \cup Q_q$, there is an edge $(a_i, b_j)$ in $H$.
    For every stack edge, $(a_i, b_j)$ of stack $S_k, 1\leq k \leq s-1$, there are three edges $(a_i, u^k_i), (u^k_i, v^k_j), (v^k_j, b_j)$ in $H$.
    (In case of multiple edges between a pair of vertices, we keep only one instance.)
    It is easy to verify that $H$ admits a separated $1$-stack $(s+q-1)$-queue layout, as for every original stack $S_k$, $1 \leq k \leq s-1$ the edges $(a_i, u^k_i)$ and $(v^k_j, b_j)$ for every $(a_i, b_j) \in S_k$ form a queue respectively, due to the increasing column and row indices of each block $U^k$ and $V^K$ starting from the bottom left of the matrix. In fact, edges between vertices in $A$ and $U^k$ and edges between vertices in $B$ and $V^k$ fit in a single queue, since the former lie below and to the left of the latter in the matrix. The original queues remain, yielding $q + s - 1$ queues in total. For a stack $S_k$, the edges $(a_i,b_j) \in S_k$ are a decreasing subset in $A \times B$ and due to the increasing ordering of indices in every column and row block of the matrix, the edges $(u^k_i,v^k_j)$ in $U^k \times V^k$ have the same property. The ordering of the row blocks $U^k$ and column blocks $V^k$, where $A \times B$ and $U^k \times V^k$, $1 \leq k \leq s-1$ are positioned diagonally from the top left to the bottom right of the matrix, then allows to combine the $s$ stacks of $G$ into one single stack in $H$.

    Finally, we show that $\qn(G) \in \Oh(\qn(H)^3)$.
    To this end, consider a $\qn(H)$-queue layout of $H$.
    Now, contract for each $i = 1,\ldots,n$ the vertex $a_i$ with its neighbors of the form $u^k_i$, $1 \leq k \leq s-1$.
    Similarly, contract for each $j = 1,\ldots,m$ the vertex $b_j$ with its neighbors of the form $v^k_j$, $1 \leq k \leq s-1$.
    The result is a $1$-shallow minor of $H$, and most crucially, the result is exactly the initial graph $G$.
    Hence, \cref{lm:contraction} with radius $r=1$ gives that $\qn(G) \le (2r+1)(2 \qn(H))^{2r+1} = 24 \cdot \qn(H)^{3}$.
\end{proof}

Using the techniques from \cref{sec:non-separated-layouts}, we can provide the final missing piece of \cref{thm:same_order}.

\begin{lemma}
    \label{lm:mn6}
    Let $G$ be a bipartite graph with a separated $1$-stack $q$-queue layout.
    Then $G$ has a subdivision $D$ with	at most $2 \lceil \log_2 q \rceil$ division vertices per edge that admits a separated $1$-stack $6$-queue layout.
\end{lemma}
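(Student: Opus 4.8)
The plan is to handle the single stack and the $q$ queues separately, to subdivide only the queue edges, and to exploit that a reduced adjacency matrix is automatically a separated layout. Write $h=\lceil\log_2 q\rceil$, let $S$ be the stack and $Q_1,\dots,Q_q$ the queues of the given separated layout with vertex order $\sigma$, and set $A'=A\cup\{$division vertices of class $A\}$, $B'=B\cup\{$division vertices of class $B\}$. The first step is to apply \cref{lm:t_subdiv} to the queue subgraph $G^q=(A\cup B,\,Q_1\cup\dots\cup Q_q)$ \emph{alone}, taking $T$ to be a complete binary tree of height $h$ (it has $2^h\ge q$ leaves, all at depth $h$). This yields a subdivision $D^q$ of $G^q$ with exactly $2h$ division vertices per edge, together with a simple $T$-layout in which $\Qh(x)=1$ on the leaves, $\Sh\equiv 0$, $\Kh\equiv 1$, and the root bag is $A\cup B$ ordered by $\sigma$.

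Second, I would turn this $T$-layout into a linear layout via \cref{lm:t_layout} by colouring every tree edge red and ordering $V(T)$ by breadth-first search. A BFS (level) order makes the tree edges pairwise non-nesting, so they legitimately form a queue, and since there are no blue edges we get $\lambda_s=0$. For $\lambda_q$ the key observation is that in a BFS order the parent of a node always precedes it, so for any fixed $y$ only its (at most two) children can lie at or beyond a given position $x$; together with $\Qh(x)\le 1$ this gives $\lambda_q\le 1+2=3$. Hence $D^q$ admits a (non-separated) $3$-queue layout $\tau$ in which $A\cup B$ still appears in the order $\sigma$.

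Third — and this is where separation is recovered cheaply — I would apply the Riffle-Lemma \cref{lm:321}-\eqref{item:riffle-bipartite} to $\tau$ with the bipartition $(A',B')$ of $D^q$, taking $k=2$, $\ell=1$, $V_1=A'$, $V_2=B'$, and letting $\sigma'$ list all of $A'$ (in $\tau$-order) before all of $B'$ (in $\tau$-order). This $\sigma'$ is exactly a separated order, and part \eqref{item:riffle-bipartite} bounds the cost by $2\cdot\ell\cdot(k-\ell)\cdot 3=6$, so $D^q$ has a separated $6$-queue layout. Crucially, the riffle preserves the relative order \emph{within} each part, so $A$ and $B$ retain their original order $\sigma$.

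Finally, I add the unsubdivided stack edges back. Let $D$ be the subdivision of $G$ obtained by subdividing each queue edge $2h$ times and leaving each stack edge intact; then $D$ has at most $2\lceil\log_2 q\rceil$ division vertices per edge. Since $A$ and $B$ keep their order $\sigma$ in $\sigma'$, the stack $S$ is still a monotonically decreasing staircase in the reduced adjacency matrix, i.e.\ a single stack, and it is edge-disjoint from the six queues; thus $\sigma'$ is a separated $1$-stack $6$-queue layout of $D$, as required. The one genuinely delicate point is the queue bound of the second step, namely that a single BFS order simultaneously renders the tree edges non-nesting and keeps $\lambda_q$ at $3$; by contrast, the subdivision count, the automatic separation of any grid, and especially the survival of the stack $S$ under the order-preserving riffle are essentially bookkeeping.
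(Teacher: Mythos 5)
Your proposal is correct and follows essentially the same route as the paper: apply \cref{lm:t_subdiv} to the queue subgraph with a complete binary tree of height $\lceil\log_2 q\rceil$, linearize the tree-layout via a BFS order and \cref{lm:t_layout} to get $\lambda_s=0$, $\lambda_q\le 3$, then separate the resulting $3$-queue layout at a factor of $2$ and reattach the unsubdivided stack edges, whose stack property survives because the order of $A$ and $B$ is preserved. The only cosmetic difference is that you invoke \cref{lm:321}-\eqref{item:riffle-bipartite} directly where the paper cites \cref{cor:separation}, which is the same statement specialized to $k=2$, $\ell=1$.
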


In other words, \cref{lm:mn6} proves that \eqref{equ:sep-4} implies \eqref{equ:sep-3} in \cref{thm:same_order}, as applying~\cref{lm:contraction} for the r-shallow minor $G$ of $D$ with $k = 2 \lceil \log_2 q \rceil$ and $r = \lceil (k+1)/2 \rceil$ implies bounded queue number of $G$ for a function $f(q)$.
\[
		\qn(G) \le 	(2r + 1) \big(2 \qn(D) \big)^{2r + 1}
		= (2 \lceil \log_2 q \rceil + 2) \big(12)^{2 \lceil \log_2 q \rceil + 2}
	\]
\begin{proof}
    Let $G = (V, E_s \cup E_1 \cup \dots \cup E_q)$ be a bipartite graph that admits a separated $1$-stack $q$-queue layout such that $E_s$ is a stack and $E_i, 1 \le i \le q$ are queues.
    Denote $h = \lceil \log_2 q \rceil$ so that $q \le 2^h$.
    We consider the subgraph of $G$ induced by the queue edges, $G^q=(V, E_1 \cup \dots \cup E_q)$.
    Let $T$ be a complete binary tree of height $h$.
    By \cref{lm:t_subdiv}, there exists a subdivision of $G^q$, denoted $D^q$, and a simple $T$-layout of $D^q$ in which $\Sh(x) = 0, \Qh(x) = 1$ for all $x \in \Vleaf(T)$, $\Sh(x) = \Qh(x) = 0$ for all non-leaf nodes $x \in V(T) \setminus \Vleaf(T)$,	and $\Kh(x, y)=1$ for all $(x, y) \in E(T)$.
    Note that $D^q$ contains exactly $2h$ division vertices per edge and it is bipartite; see \cref{fig:mn6}.

    \begin{figure}[!tb]
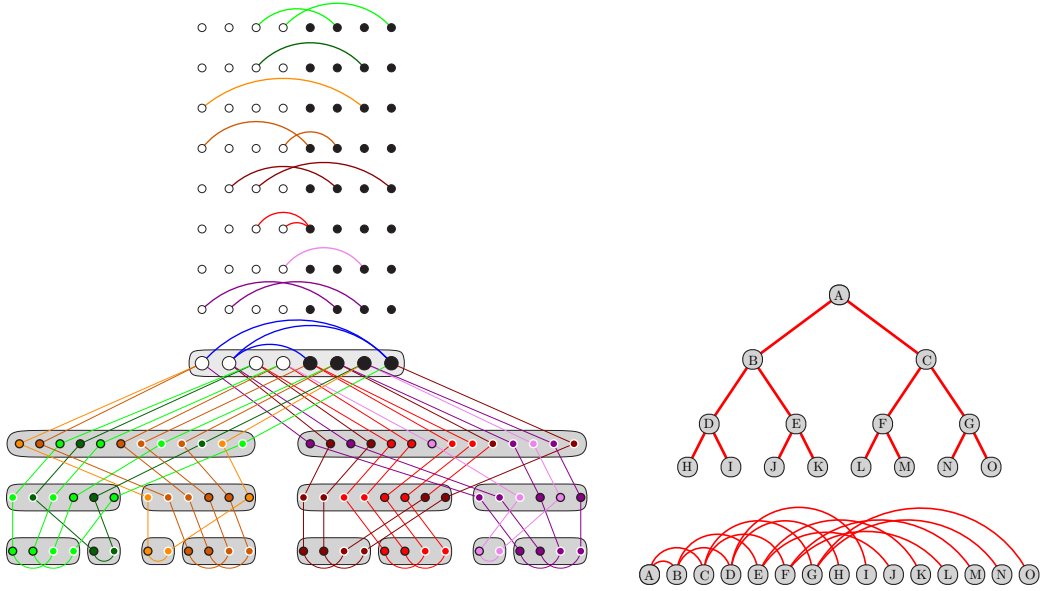

	\begin{subfigure}[b]{.56\linewidth}
		\center
		\includegraphics[page=5,width=\linewidth]{figures/subdivisions_new}
		\subcaption{A tree-layout of the subdivision of a separated $1$-stack $8$-queue graph $G$. Colors of edges and division vertices in the tree-layout correspond to the original queues and stack of $G$ shown above the root node, while vertex outline colors in non-root nodes correspond to the vertex partitions in G.}
	\end{subfigure}
	\hfill
	\begin{subfigure}[b]{.42\linewidth}
		\center
		\includegraphics[page=6,height=5cm]{figures/subdivisions_new}
		\subcaption{\nolinenumbers{}A tree-partition and a layout of the corresponding binary tree $T$, where the red edge color corresponds to~\cref{lm:t_layout}.}
	\end{subfigure}
	\caption{An illustration for \cref{lm:mn6}:
		Subdividing a separated $1$-stack $8$-queue graph $G$
		with at most $2 \lceil \log_2 q \rceil=6$ division vertices per edge
		to obtain a mixed $1$-stack $6$-queue graph.}
	\label{fig:mn6}
    \end{figure}

    Now, color all edges of $T$ red and find a $1$-queue layout of $T$ via a breadth-first search traversal such that every node precedes its children in the order.
    By \cref{lm:t_layout} applied to graph $G^q$ and its simple $T$-layout, we obtain a linear layout of $D^q$.
    Let us argue that this result is in fact a $3$-queue layout of $D^q$, i.e., that $\lambda_s = 0$ and $\lambda_q \le 3$ as defined in \cref{lm:t_layout}.
    On the one hand, $\Sh(x) = 0$ for all the nodes of $T$ and there are no blue edges in $T$.
    Thus, we have $\lambda_s=0$.
    On the other hand, every node $x \in V(T)$ has at most two outgoing edges in the layout of $T$, i.e., the set $\{ (y,z) \in E^q \mid y \le_{\sigma} x \le_{\sigma} z \}$ contains at most two edges.
    Hence,
    \begin{equation*}
	\begin{aligned}
		\lambda_q & = \max_{x \in V(T)} \Big( \Qh(x) +  \max_{y \in V(T) \colon y \le_{\sigma} x}  \sum_{(y,z) \in E^q \colon x \le_{\sigma} z} \Kh(y, z)  \Big) \\
			& \le \max_{x \in V(T)} \Big( 1 +  \max_{y \in V(T) \colon y \le_{\sigma} x}  2  \Big) \le 3.
	\end{aligned}
    \end{equation*}

    The final step is to convert the $3$-queue layout of $D^q$ into a separated layout.
    This is accomplished by	\cref{cor:separation}, which in worst case doubles the number of queues.
    The transformations of \cref{lm:t_layout} and \cref{cor:separation} keep the relative order of the original vertices $V$ unchanged.
    Thus, the resulting layout of $D^q$ can be joined with the stack edges $E_s$ to finally yield a subdivision $D$ of $G$ (in which the stack edges are not subdivided) with a separated $1$-stack $6$-queue layout.
\end{proof}

\section{Conclusions and Open Questions}
\label{sec:conclusions}

We have explored the relation between mixed and pure stack or queue layouts, in the separated as well as in the non-separated setting.
An interesting (and somewhat surprising) result is the equivalence of \cref{open:rel1,open:rel2}, which we believe sheds some light on the famous open problem, whether bounded stack number always implies bounded queue number.
Let us highlight a few intriguing questions and possible directions in the area.

\begin{enumerate}
    \item Do graphs with (non-separated) $1$-stack $1$-queue layouts have a constant queue number?
        This might be hard in general, but one could for example start with subcubic graphs.

    \item Do graphs with separated $1$-stack $2$-queue layouts have a constant queue number?
        \cref{lm:sep-2,lm:mn6} combined show that separated $1$-stack $6$-queue graphs are as hard as the general case, and we feel that the same holds for separated $1$-stack $2$-queue graphs.
        On the other hand, \cref{thm:1s1q} solves the separated $1$-stack $1$-queue case.

    \item Is there a constant $C$ such that every bipartite $1$-stack $1$-queue graph admits a separated $C$-stack $C$-queue layout?
        A positive answer would imply that \cref{open:rel3,open:rel1,open:rel2} are all equivalent.
\end{enumerate}

\bibliographystyle{plainurl}
\bibliography{references}

\end{document}